\def\R{\Bbb R}
\newtheorem{lemma}{Лемма}
\newtheorem{theorem}{Теорема}
\newtheorem{remark}{Замечание}
\newtheorem{propos}{Предложение}
\begin{document}

\title{\bf Равномерная сходимость \\ на подпространствах в
эргодической теореме фон Неймана с непрерывным временем}

\author{Качуровский А.Г., Подвигин И.В., Тодиков В.Э.}
\date{}
\maketitle

\section{Введение}

{\bf\thesection.1.}
Пусть $\mathcal{H}$ -- гильбертово пространство, $\{U^t\}_{t\in\mathbb{R}}$
--- группа унитарных операторов, действующих в $\mathcal{H}$.
Будем предполагать, что группа~$\{U^t\}_{t\in\mathbb{R}}$ является сильно
непрерывной (см.~\cite[гл.~VIII, \S1]{DSh1}; такими будут, например,
рассматривавшиеся в~\cite{Neum} группы унитарных операторов
~$\{U^t\}_{t\in \R}$ в $L_2(\Omega),$ порождаемые группами
преобразований, сохраняющих меру на вероятностном пространстве
$\Omega$, т.е. потоками).

Для каждого вектора ${f\in\mathcal{H}}$ и всех ${t,s\in\mathbb{R}}$
таких, что ${t>s},$ рассмотрим эргодические средние
$$
P_{t,s}f=\frac{1}{t-s}\int \limits_{s}^{t}U^{\tau}fd\tau.
$$
Классическая эргодическая теорема фон
Неймана~\cite{Neum} утверждает для каждого вектора
${f\in\mathcal{H}}$ существование предела по норме пространства
$\mathcal{H}$
$$
\lim_{t-s\rightarrow\infty}P_{t,s}f=f^*:=Pf,
$$
где $P$ --- ортогональный проектор на подпространство неподвижных
векторов группы~$\{U^t\}_{t\in\mathbb{R}}$.

Для группы~$\{U^t\}_{t\in\mathbb{R}}$ по теореме Стоуна~\cite[гл.~XII,
\S6]{DSh2} существует инфинитезимальный генератор $\mathbf{B}$, т.е.
${U^t=e^{it\mathbf{B}}}$ для всех ${t\in\mathbb{R}.}$ Пусть
${\{E(\lambda)\}_{\lambda\in\mathbb{R}}}$
--- разложение единицы, соответствующее группе
$\{U^t\}_{t\in\mathbb{R}}$ (ее инфинитезимальному генератору). Тогда (см.,
например, \cite{Neum}) для любых векторов ${f,g\in\mathcal{H}}$
справедливо представление
$$
(U^tf,g)_{\mathcal{H}}=\int_{-\infty}^\infty
e^{it\lambda}\,d(E(\lambda)f,g)_{\mathcal{H}}
$$
для всех $t\in\mathbb{R},$ и можно ввести спектральные меры
$\sigma_{f,g}$ --- такие конечные борелевские меры на действительной
прямой, что для всех ее борелевских подмножеств~$C$
$$
\sigma_{f,g}(C)=\int\limits_{C}d(E(\lambda) f,g)_\mathcal{H};
$$
обозначим ${\sigma_{f}=\sigma_{f,f}}.$
Как хорошо известно (и
было доказано и применено еще в \cite{Neum}), при всех $t>s$
справедливо интегральное представление
    $$
    \|P_{t,s}f\|_\mathcal{H}^2
    =  \int    \limits_{-\infty}^{+\infty}
    \left(\frac{\sin\frac{(t-s)x}{2}}{\frac{(t-s)x}{2}}\right)^2d\sigma_{f}(x)
    = \int\limits_{-\infty}^{+\infty} F_{t-s}(x)  \, d\sigma_{f}(x)
    $$
для ядра
$$
F_\tau(x) = \left(\frac{\sin \frac{\tau x}{2}}{\frac{\tau
        x}{2}}\right)^2, \ x\neq 0; \\\ F_\tau(0)=1;
$$
очевидно, всегда $0\leq F_\tau(x) \leq 1$,
и $F_\tau(x) \leq {4 \over (\tau x)^2}$  для всех $x \not = 0$.

Учитывая, что ${P=E(0)},$ при всех $t>s$ получаем интегральное представление
$$
    \|P_{t,s}f-f^*\|_\mathcal{H}^2=\|P_{t,s}(f-f^*)\|_\mathcal{H}^2
    =\int_{\mathbb{R}}
    F_{t-s}(x)  \, d\sigma_{f-f^*}(x)=\int_{\mathbb{R}\setminus\{0\}}
    F_{t-s}(x)  \, d\sigma_{f}(x).\eqno(1)
$$

{\bf\thesection.2.} В недавней работе~\cite{JP} Бен-Арци и Морисс
обнаружили существование (степенной) равномерной сходимости на
некоторых специальных подпространствах в эргодической теореме фон
Неймана с непрерывным временем. А именно, для некоторого банахова
подпространства $\mathcal{X},$ плотно и непрерывно вложенного в
$\mathcal{H},$ была получена оценка
${\|P_{t,-t}-P\|_{\mathcal{X}\to\mathcal{H}}=\mathcal{O}(t^{-l})}$
для некоторого  ${l=l(\mathcal{X})>0}$ при ${t\to\infty}$ (точную
формулировку мы приводим в теореме~\ref{ThMorisse+BenArt}). Поиски
места этого результата в общей теории скоростей сходимости в
эргодических теоремах (см., например, обзоры~\cite{Ka96,KaPo16})
привели к появлению нашей статьи.

В теореме~\ref{Th1} мы даем спектральный критерий равномерной
степенной сходимости с показателем $\alpha\in [0,2)$ на одномерных
подпространствах в $\mathcal{H}$. Оказывается (замечание~\ref{rm2}),
степенная скорость сходимости с показателем $\alpha=2$ является
максимально возможной; теорема~\ref{Th2} дает спектральный же (в
несколько других терминах) критерий наличия такой максимальной
скорости на одномерных подпространствах. Попутно теоремы~\ref{Th1}
и~\ref{Th2} обобщают и уточняют оценки скоростей сходимости в
эргодической теореме фон Неймана для (полу)потоков, полученные ранее
в~\cite{JK}; при этом одна из новых констант этих оценок оказывается
(п.~3.4 раздела 3) точной.

Далее в теоремах~\ref{ThMain} и~\ref{Th2+} мы переносим критерии
теорем~\ref{Th1} и~\ref{Th2} на общий случай многомерных векторных
подпространств в $\mathcal{H}$ со своими нормами. Удалось также
получить (теорема~\ref{ThCharacterizationOfSpaces}) полное описание
всех таких многомерных (нормированных, со своими нормами)
подпространств с равномерной степенной со всеми возможными
показателями ${\alpha\in [0,2]}$ сходимостью.

В разделе 3 мы покажем (замечание~\ref{rm0+}), что рассматриваемая
равномерная сходимость на всем пространстве $\mathcal{H}$ имеет место
лишь в случае спектрального пробела --- что в теореме
фон Неймана для потоков на пространстве Лебега равносильно (замечание~\ref{rm0++})
периодичности п.в. траекторий с ограниченными в совокупности периодами, т.е. бывает
лишь в тривиальных случаях; уже этим объясняется наш интерес к равномерной сходимости
именно на подпространствах в $\mathcal{H}$ со своими нормами.

В разделе 4 рассматриваются также возможные приложения полученных
результатов к исследованию решений некоторых уравнений математической физики,
проведенным в~\cite{JP}.

\section{Одномерные подпространства с равномерной \\ степенной сходимостью}

{\bf\thesection.1.} Полное описание всех одномерных подпространств с равномерной
степенной (с показателями меньше 2) сходимостью фактически было дано в~\cite{KR,JK}, где
рассматривались эргодические средние полугрупп изометрических
операторов ~$\{U^t\}_{t\geq 0}$ в $L_2(\Omega),$ порожденных
полугруппами преобразований, сохраняющих меру на вероятностном
пространстве $\Omega$ (т.е. обычная эргодическая теорема фон Неймана
для полупотоков). Поскольку представление~(1) справедливо и для тех
эргодических средних (подробности см., например, в~\cite{JK}), а
константы наших оценок обеих частей теоремы~1 ниже точнее
соответствующих им констант из~\cite{JK}, то доказательство
теоремы~1 немедленно дает и уточнение тех старых оценок.

Основное уточнение получилось за счет другого подхода к
доказательству леммы~1 ниже. В~\cite{JK} для оценки интеграла от
ядра Фейера по спектральной мере использовалось разложение его в ряд
(следуя подходу для случая дискретного времени
из~\cite{Ka96,KS10,KS11}). Мы здесь применяем для оценки этого
интеграла использовавшийся в~\cite{KR} и оказавшийся здесь более
подходящим метод интегрирования по частям, предложенный В.Ф.
Гапошкиным (для случая дискретного времени) в~\cite{Gap}. Отметим
также, что, поскольку представление (1) справедливо и для
(рассматривавшихся в~\cite{Gap}) средних стационарных в широком
смысле процессов с непрерывным временем --- см., например,
\cite[теорема 18.3.1]{IL}, то наша теорема 1 имеет очевидный точный
аналог и для тех стохастических процессов.

Далее нам потребуются следующие две технические леммы, уточняющие
соответствующие им леммы 1 и 2 из~\cite{JK}.

    \begin{lemma}\label{lmUpperEst}
        Для всех $t > s$ справедливо неравенство
        $$
        \|P_{t,s} f- f^{*}\|_\mathcal{H}^2\leq \frac{8}{(t-s)^2}\int \limits_{2\over t-s}^{\infty} x^{-3}\sigma_{f-f^*}(-x,x]dx.
        $$
     \end{lemma}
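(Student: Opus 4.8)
The plan is to start from the spectral identity~(1), i.e.\ $\|P_{t,s}f-f^*\|_{\mathcal H}^2=\int_{\mathbb R}F_{t-s}(x)\,d\sigma_{f-f^*}(x)$, and to replace $F_{t-s}$ by the elementary envelope already recorded in the excerpt. Writing $\tau=t-s>0$ and combining $0\le F_\tau\le 1$ with $F_\tau(x)\le 4/(\tau x)^2$ for $x\neq 0$ gives the single pointwise estimate $F_\tau(x)\le\min\{1,\ 4/(\tau x)^2\}$ valid for every real $x$ (at $x=0$ the right-hand side equals $1=F_\tau(0)$).

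The main step is to rewrite this envelope as a layer-cake integral and then apply Tonelli's theorem. An elementary computation shows that, for all real $x$,
$$\min\Bigl\{1,\ \frac{4}{(\tau x)^2}\Bigr\}=\frac{4}{\tau^2}\,\max\Bigl\{|x|,\frac{2}{\tau}\Bigr\}^{-2}=\frac{8}{\tau^2}\int_{2/\tau}^{\infty}r^{-3}\,\mathbf{1}\{r>|x|\}\,dr .$$
Plugging this into~(1) and interchanging the order of integration (the integrand is non-negative, so Tonelli applies) we obtain
$$\|P_{t,s}f-f^*\|_{\mathcal H}^2\le\frac{8}{\tau^2}\int_{2/\tau}^{\infty}r^{-3}\,\sigma_{f-f^*}\bigl(\{x:|x|<r\}\bigr)\,dr .$$
Since $\{x:|x|<r\}=(-r,r)\subseteq(-r,r]$ we have $\sigma_{f-f^*}(\{|x|<r\})\le\sigma_{f-f^*}(-r,r]$; relabelling $r$ as $x$ gives exactly the claimed bound. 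An equivalent presentation: set $M(r):=\sigma_{f-f^*}(-r,r]$, note that $\int_{\mathbb R}h(|x|)\,d\sigma_{f-f^*}$ equals $\int_{(0,\infty)}h\,dM$ modulo the atom at $0$, and integrate $\int g\,dM$ by parts with $g(r)=\min\{1,4/(\tau r)^2\}$; the boundary contributions vanish because $g(r)M(r)\to 0$ both as $r\to\infty$ and as $r\to 0^+$.

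No real obstacle remains once the right envelope is chosen; the one thing to get right is precisely that choice, since it is what produces the constant $8$ rather than $16$. If one instead splits the integral at $|x|=2/\tau$, uses $F_\tau\le 1$ on the inner part and $F_\tau(x)\le 4/(\tau x)^2$ on the outer part, and then estimates the two contributions \emph{separately} by the same layer-cake device, the inner part gives $\sigma_{f-f^*}(-2/\tau,2/\tau]\le\frac{8}{\tau^2}\int_{2/\tau}^{\infty}r^{-3}M(r)\,dr$ and the outer part gives another copy of the same integral, doubling the constant. Keeping both pieces in integral form and recombining $\sigma_{f-f^*}(-2/\tau,2/\tau]$ with $\sigma_{f-f^*}(\{2/\tau<|x|<r\})$ into $\sigma_{f-f^*}(\{|x|<r\})$ — equivalently, layer-caking the single envelope from the outset — restores the sharp constant. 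Sharpness is already visible from $\sigma_{f-f^*}=\delta_c$ with $c>2/\tau$, for which both sides equal $4/(\tau c)^2$. The argument uses only finiteness and non-negativity of $\sigma_{f-f^*}$ and is insensitive to a possible atom at $0$.
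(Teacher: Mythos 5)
Your proof is correct, and it reaches the paper's bound by a genuinely different mechanism. Writing $\tau=t-s$, the paper splits the spectral integral (1) at $\pm 2/\tau$, bounds $F_\tau$ by $1$ inside and by $4/(\tau x)^2$ outside, and then performs a Riemann--Stieltjes integration by parts on the distribution function $G(x)=\sigma_f(-\infty,x]$; the boundary terms produced there exactly cancel the central mass $G(2/\tau)-G(-2/\tau)$, and that cancellation is what yields the constant $8$. You instead encode the same cancellation once and for all in the layer-cake identity $\min\{1,\,4/(\tau x)^2\}=\frac{8}{\tau^2}\int_{2/\tau}^{\infty}r^{-3}\mathbf{1}\{r>|x|\}\,dr$ followed by a single application of Tonelli and the monotonicity step $\sigma_{f-f^*}((-r,r))\le\sigma_{f-f^*}(-r,r]$. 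This buys a shorter argument with none of the Stieltjes integration-by-parts bookkeeping (no boundary terms at $\pm\infty$, no discussion of right-continuity of $G$), no need for the reduction to $f^*=0$, and an explicit explanation --- implicit in the paper's cancellation but not spelled out there --- of why the constant is $8$ rather than $16$. Two small remarks: in your sharpness aside, for $\sigma_{f-f^*}=\delta_c$ the left-hand side of the lemma equals $4\sin^2(\tau c/2)/(\tau c)^2$, so it coincides with $4/(\tau c)^2$ only when $\sin^2(\tau c/2)=1$ (it is the envelope, not the lemma's left-hand side, that is attained for general $c$); and in your alternative Stieltjes presentation the vanishing of $g(r)M(r)$ as $r\to 0^+$ uses $\sigma_{f-f^*}(\{0\})=0$, which does hold here because $(f-f^*)^*=0$, while your primary Tonelli argument is, as you note, insensitive to a possible atom at the origin.
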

\begin{proof}[Доказательство леммы~\ref{lmUpperEst}] Без ограничения общности считаем, что  ${f^*=0}.$
Положим ${\tau=t-s}$ и воспользуемся представлением (1):
    $$
    \|P_{t,s}f-f^*\|_\mathcal{H}^2 =  \|P_{t,s}f\|_\mathcal{H}^2 =
    \int\limits_{-\infty}^{+\infty}
    F_{\tau}(x)  \, d(E(x)f,f)_\mathcal{H} =
    $$
    $$
        =   \int\limits_{(-{2  \over \tau},{2 \over \tau}]}
        F_{\tau}(x)  \, d\sigma_{f}(x) + \int\limits_{(-{\infty},-{2  \over \tau}]}
        F_{\tau}(x)  \, d\sigma_{f}(x) + \int\limits_{({2  \over \tau},{\infty})}
        F_{\tau}(x)  \, d\sigma_{f}(x) \leq
    $$
    $$
        \leq     \int\limits_{(-{2  \over \tau},{2 \over \tau}]}
      \, d\sigma_{f}(x) +\frac{4}{\tau^2}\left( \ \int\limits_{(-{\infty},-{2  \over \tau}]}
     x^{-2}  \, d\sigma_{f}(x) + \int\limits_{({2  \over \tau},{\infty})}
       x^{-2} \, d\sigma_{f}(x) \right).
    $$
        Пусть $G(x)=\sigma_{f} (-\infty,\ x]$,\ $G(x)$ --- монотонная функция.
    Поскольку $\sigma_{f}$ является конечной борелевской мерой, то $G(x)$
    будет функцией ограниченной вариации на любом интервале вещественной
    прямой. Таким образом, наряду с интегралом Лебега--Стилтьеса можно
    рассмотреть интеграл Римана--Стилтьеса по функции $G(x)$. Если $h(x)$
    непрерывная на $[a,b]$ функция, то для нее существует интеграл
    Римана--Стилтьеса, и его значение совпадает
    с интегралом Лебега--Стилтьеса:
    $$
    \int\limits_{[a,b]} h(x)dG(x) = \int\limits_{[a,b]} h(x)d\sigma_{f}(x).
    $$
    Поэтому, если $h(x)$ непрерывна и имеет ограниченную вариацию на $[a,b]$, то
    $$
    \int\limits_{[a,b]} h(x)dG(x) =
    h(b)G(b) - h(a)G(a) - \int\limits_{[a,b]} G(x)dh(x).
    $$
    Это --- аналог формулы интегрирования по частям для интеграла
    Римана--Стилтьеса \ (см.~\cite[теорема~6.30]{Rud}; а также~\cite[гл.~II, \S6, теорема~11]{Sh}). Поскольку функция~$G$ непрерывна справа, то
    последняя формула справедлива как для конечных
    полуинтервалов~${(a,b]},$ так и бесконечных ${(-\infty,b]}$ и ${(a,+\infty)}.$
    Получаем:
    $$
     \|P_{t,s}f\|_\mathcal{H}^2 \leq
    \int\limits_{(-{2  \over \tau},{2 \over \tau}]}
    \, dG(x) + \frac{4}{\tau^2}\left( \ \int\limits_{(-{\infty},-{2  \over \tau}]}
    x^{-2}  \, dG(x) + \int\limits_{({2  \over \tau},{\infty})}
    x^{-2}\, dG(x)\right) =
    $$
    $$
    =G\left({2\over \tau}\right) - G\left({-2\over \tau}\right)+\frac{4}{\tau^2}\left( x^{-2}G(x)\Big|_{-\infty}^{-2\over \tau}+2\int\limits_{-\infty}^{-2\over \tau}
    x^{-3}G(x)  \, dx \right)+
    $$
    $$
    +\frac{4}{\tau^2}
    \left( x^{-2}G(x)\Big|_{2\over \tau}^{\infty}+2\int\limits_{2\over \tau}^{\infty}
    x^{-3}G(x)  \, dx \right)=
    $$
    $$
    =G\left({2\over \tau}\right) - G\left({-2\over \tau}\right)+G\left({-2\over \tau}\right)-G\left({2\over \tau}\right)
     + \frac{8}{\tau^2} \int\limits_{2\over \tau}^{\infty}
    x^{-3}(G(x)-G(-x)) \, dx =
    $$
    $$
    =\frac{8}{\tau^2} \int\limits_{2\over \tau}^{\infty}
    x^{-3}\sigma_{f}(-x,x] \, dx.
    $$
    Последнее равенство следует из замены $y=-x$ в интеграле $\int\limits_{-\infty}^{-2\over \tau}
    x^{-3}G(x)  \, dx$ и с учетом того, что $G(x)-G(-x) =\sigma_{f}(-x,x]$.
    Лемма~\ref{lmUpperEst} доказана.
\end{proof}

\begin{lemma}\label{lmLowerEst}
Пусть ${\varepsilon\in(0,\pi)};$ тогда для всех ${t>s}$ справедливо
неравенство
$$
\sigma_{f-f^*}\left(-{2\varepsilon \over t-s}, {2\varepsilon \over
t-s}\right]\leq
\frac{\varepsilon^2}{\sin^2\varepsilon}\|P_{t,s}f-f^*\|_\mathcal{H}^2.
$$
\end{lemma}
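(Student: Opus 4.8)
The plan is to reduce to the case $f^*=0$ and then read the bound off the spectral representation~(1) by dropping the part of the integral coming from outside the interval in question. Passing from $f$ to $g:=f-f^*$ affects neither side of the asserted inequality, because $U^\tau f^*=f^*$ for all $\tau$ (so $P_{t,s}g=P_{t,s}f-f^*$), because $g^*=Pg=P(f-f^*)=0$, and because $\sigma_g=\sigma_{f-f^*}$; and in this reduced situation $\sigma_f$ has no atom at $0$, since $Pf=0$. Hence it suffices to prove, writing $\tau:=t-s$ and $I:=\bigl(-\tfrac{2\varepsilon}{\tau},\tfrac{2\varepsilon}{\tau}\bigr]$, that $\sigma_f(I)\le\tfrac{\varepsilon^2}{\sin^2\varepsilon}\,\|P_{t,s}f\|_{\mathcal H}^2$ whenever $f^*=0$.

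The one non-routine ingredient is an elementary lower bound for $F_\tau$ on $I$. If $x\in I$, then $u:=\tfrac{\tau x}{2}$ lies in $[-\varepsilon,\varepsilon]\subset(-\pi,\pi)$, and the estimate needed is $\bigl(\tfrac{\sin u}{u}\bigr)^2\ge\tfrac{\sin^2\varepsilon}{\varepsilon^2}$ for $|u|\le\varepsilon$. This holds because $u\mapsto\tfrac{\sin u}{u}$ is even and non-increasing on $[0,\pi]$ --- equivalently, because $\sin$ is concave there, so that the chord inequality $\sin u\ge\tfrac{\sin\varepsilon}{\varepsilon}\,u$ is valid for $u\in[0,\varepsilon]$ --- while the hypothesis $\varepsilon\in(0,\pi)$ secures $\sin\varepsilon>0$. (For $\varepsilon\ge\pi$ the function $F_\tau$ already vanishes somewhere inside $I$, so no such bound can hold; this is precisely why the restriction on $\varepsilon$ appears.) Consequently $F_\tau(x)\ge\tfrac{\sin^2\varepsilon}{\varepsilon^2}$ for every $x\in I$.

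It remains to assemble the pieces. Since $F_\tau\ge0$ and $\sigma_f$ is a nonnegative measure, formula~(1) with $f^*=0$ gives
$$
\|P_{t,s}f\|_{\mathcal H}^2=\int_{\mathbb R\setminus\{0\}}F_\tau(x)\,d\sigma_f(x)=\int_{\mathbb R}F_\tau(x)\,d\sigma_f(x)\ \ge\ \int_{I}F_\tau(x)\,d\sigma_f(x)\ \ge\ \frac{\sin^2\varepsilon}{\varepsilon^2}\,\sigma_f(I);
$$
multiplying through by $\tfrac{\varepsilon^2}{\sin^2\varepsilon}$ and recalling $\sigma_f=\sigma_{f-f^*}$ yields the lemma. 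I do not expect any genuine obstacle: the only things to be careful about are the sharp constant $\min_{|u|\le\varepsilon}(\sin u/u)^2=(\sin\varepsilon/\varepsilon)^2$ (where the assumption $\varepsilon<\pi$ is used) and the harmless bookkeeping that $\sigma_{f-f^*}$ carries no mass at $0$ (a point of $I$), so that $I$ may be used in place of $I\setminus\{0\}$.
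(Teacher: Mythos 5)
Your proposal is correct and follows essentially the same route as the paper: reduce to $f^*=0$, drop the part of the spectral integral in (1) outside $\bigl(-\tfrac{2\varepsilon}{t-s},\tfrac{2\varepsilon}{t-s}\bigr]$, and bound $F_{t-s}$ from below there by $\min_{0<|y|\le\varepsilon}(\sin y/y)^2=\sin^2\varepsilon/\varepsilon^2$. Your only addition is an explicit justification (concavity/chord inequality) of the monotonicity of $\sin y/y$ on $(0,\pi)$, which the paper uses implicitly.
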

\begin{proof}[Доказательство леммы~\ref{lmLowerEst}] Снова, не ограничивая общности, считаем, что ${f^*=0}.$
Положим ${\tau=t-s}$ и воспользуемся представлением (1):
    $$
    \|P_{t,s}f\|_\mathcal{H}^2
    = \int \limits_{-\infty}^{+\infty} F_{\tau}(x) d\sigma_{f}(x)
    \geq \int \limits_{\left(-{2\varepsilon \over \tau}, {2\varepsilon \over \tau}\right]} F_{\tau}(x) d\sigma_{f}(x) \geq
    $$
    $$
    \geq \min\limits_{0<|x| \leq {2\varepsilon \over \tau}}
    \left({\sin{\tau x \over 2} \over {\tau x\over 2}}\right)^2
    \sigma_{f}\left(-{2\varepsilon \over \tau}, {2\varepsilon \over \tau}\right]
    =
    $$
    $$
    =\min\limits_{0<|y| \leq {\varepsilon}}
    \left({\sin{y} \over {y}}\right)^2
    \sigma_{f}\left(-{2\varepsilon \over \tau}, {2\varepsilon \over \tau}\right] =
    \frac{\sin^2\varepsilon}{\varepsilon^2}
    \sigma_{f}\left(-{2\varepsilon\over \tau}, {2\varepsilon \over \tau}\right].
    $$
    Лемма~\ref{lmLowerEst} доказана.
\end{proof}


{\bf\thesection.2. Критерий степенной сходимости с показателем $\alpha<2$.} Для
${\alpha\in[0,2]}$ положим
$$
\rho(\alpha)=\inf\limits_{x>0}\frac{x^{2-\alpha}}{\sin^2x}.
$$
Нетрудно убедиться, что при ${\alpha=0}$ точная нижняя грань
достигается при ${x\to0+},$ и поэтому ${\rho(0)=1}.$ При
${\alpha=2}$ инфимум достигается в точках ${x_n=\pi/2+\pi n,
n\geq0}$ и также равен единице: ${\rho(2)=1}.$ Для
${\alpha\in(0,2)}$ инфимум достигается на первом положительном
решении уравнения ${\tg x=\frac{2x}{2-\alpha}}.$ Этот корень не
превосходит ${\pi/2},$ поэтому
${\rho(\alpha)\leq\left(\frac{\pi}{2}\right)^{2-\alpha}}$ при всех
${\alpha\in[0,2]}$. Эта оценка является хорошим приближением
$\rho(\alpha)$ при $\alpha$ близких к 2. Можно также легко
проверить, что ${1\leq\rho(\alpha)\leq\sin^{-2}(1)};$ при этом
значение ${\sin^{-2}(1)}$ достигается при ${\alpha=2(1-\ctg(1)).}$


\begin{theorem}\label{Th1}
    Пусть ${\alpha \in [0,2)};$ зафиксируем ${f \in \mathcal{H}}$. Тогда:

    1. Если спектральная мера $\sigma_{f-f^*}$ имеет степенную особенность в
    нуле, т.е. если для некоторой положительной константы $A$ при всех $\delta>0$ выполняется неравенство
    $$
    \sigma_{f-f^*}(-\delta,\delta]\leq A\delta^\alpha,
    $$
    то скорость сходимости эргодических средних $P_{t,s}f$ --- степенная с
    тем же показателем степени, т.е. при всех $t > s$
    $$
    \|P_{t,s}f-f^{*}\|^2_{\mathcal{H}} \leq B(t-s)^{-\alpha},
    $$
    где можно положить
    $B = \frac{2^{\alpha+1}}{2-\alpha}A.$

    2. Если скорость сходимости эргодических средних $P_{t,s}f$ ---
    степенная, т.е. если для некоторой положительной константы $B$ при
    всех $t > s$ выполняется неравенство
    $$
    \|P_{t,s}f-f^*\|_\mathcal{H}^2\leq B(t-s)^{-\alpha},
    $$
    то спектральная мера $\sigma_{f-f^*}$ имеет степенную особенность в
    нуле (с тем же показателем степени), т.е. для всех ${\delta>0}$
    $$
    \sigma _{f-f^*}(-\delta,\delta]\leq A\delta ^\alpha, \hbox { где }
    A = \frac{\rho(\alpha)}{2^\alpha} B\left(\le \frac{\pi^{2-\alpha}}4 B\right).
    $$
\end{theorem}

\begin{proof}[Доказательство теоремы~\ref{Th1}]
    Без ограничения общности считаем, что ${f^*=0}.$ Полагая ${\tau=t-s},$ по лемме~\ref{lmUpperEst} получаем:
    $$
 \|P_{t,s}f\|^2_{\mathcal{H}}
    \leq \frac{8}{\tau^2}\int \limits_{2\over \tau}^{\infty} x^{-3}\sigma_{f}(-x,x]dx=
    $$
    $$
    =\frac{8A}{\tau)^2}\int \limits_{2\over \tau}^{\infty} x^{-3}x^{\alpha}dx=
-\frac{8A}{(-2+\alpha)\tau^2}\left(\frac{2}{\tau}\right)^{-2+\alpha}=
\frac{2^{\alpha+1}}{2-\alpha}A\tau^{-\alpha}.
    $$

    Теперь докажем вторую часть теоремы. По лемме~\ref{lmLowerEst}
    при каждом $\varepsilon\in (0,\pi)$ для всех $\tau>0$ получаем:
    $$
    \sigma_{f}\left(-\frac{2\varepsilon}{\tau}, \frac{2\varepsilon}{\tau}\right] \leq
    \frac{\varepsilon^2}{\sin^2\varepsilon}B\tau^{-\alpha}.
    $$
Представляя произвольное ${\delta>0}$ в виде
${\delta=\frac{2\varepsilon}{\tau}}$ при ${\varepsilon\in(0,\pi)}$ и
$\tau>0,$ получаем:
$$
\sigma_{f}(-\delta,\delta]
\leq\frac{\varepsilon^{2-\alpha}}{\sin^2\varepsilon}\frac{B}{2^\alpha}\delta^{\alpha}.
$$
Минимизация константы в правой части по ${\varepsilon\in(0,\pi)}$
приводит к требуемому неравенству, что и завершает доказательство
теоремы~\ref{Th1}.
\end{proof}

Как будет показано далее (предложение 4), константа
$\rho(\alpha)/{2^\alpha}$ во второй части теоремы~\ref{Th1}
является точной (в~\cite{JK} соответствующая константа была чуть
хуже, и имела вид $(\frac{\pi}{2})^{2-\alpha}/{2}^\alpha).$

Разумеется, аналоги спектрального критерия степенной скорости сходимости теоремы 1 могут быть получены и для более широкого диапазона скоростей. Следующий диапазон был предложен В.Ф. Гапошкиным в~\cite{Gap}.

\begin{remark} Пусть $\alpha\in[0,2),$ и функция $\varphi (u)$ ---
    слабо колеблющаяся на $[1,\infty),$ т.е. для любого $\delta >0$
    функция $\varphi (u)u^\delta$ монотонно возрастает, а функция
    $\varphi (u)u^{-\delta }$ монотонно убывает. Тогда аналог утверждения теоремы 1 может быть получен и для всех скоростей сходимости вида $t^{-\alpha}\varphi(t)$ (утверждение теоремы 1 соответствует случаю $\varphi(t)\equiv 1$; при $\alpha=0$ и $\varphi(t)=\ln^\beta t,$ $\beta \ge 0$ получаем аналог этого утверждения для логарифмических скоростей; рассматриваемый диапазон скоростей включает и все скорости вида $t^{-\alpha}\ln^\beta t$ для всех $\alpha\in(0,2)$ при всех $\beta$).
\end{remark}

Доказательства аналогов теоремы 1 для указанных выше скоростей могут
быть получены так же, как и утверждения теоремы 1, конкретизацией
оценок лемм 1 и 2 для каждой из этих скоростей; поэтому мы и
сформулировали эти утверждения в виде двух отдельных лемм (вынеся их
из доказательства теоремы 1). Нас же в этой работе интересуют
исключительно степенные скорости.


{\bf\thesection.3. Степенная сходимость с показателем ${\alpha\geq2}$.}
Для исчерпывающего
решения рассматриваемого в теореме~\ref{Th1} вопроса о критерии
наличия степенной скорости сходимости остается разобрать случай
$\alpha\geq 2$.

Оказывается, степенной скорости сходимости с показателем
${\alpha>2}$ не бывает (за исключением вырожденного случая $f=f^*$).
Доказательство этого факта, данное для эргодической теоремы фон
Неймана для полупотоков в замечании~3 в~\cite{JK} (полученное там
переносом соответствующего результата В.Ф. Гапошкина для дискретного
времени --- следствия 5 в~\cite{Ga75}
---  на время непрерывное) --- может быть перенесено и на наш случай почти
дословно. В замечании~\ref{rm2} раздела~4 мы дадим независимое совсем
простое доказательство того, что уже скоростей сходимости $o(t-s)^{-2}$
при $t-s\rightarrow \infty$ не бывает, т.е. что скорость сходимости
$O(t-s)^{-2}$ является максимально возможной.
Задача нахождения критерия наличия
такой максимально возможной (степенной с показателем ${\alpha=2}$)
скорости сходимости будет решена далее в разделе 4 теоремой~\ref{Th2},
и потребует корректировки подхода теоремы~\ref{Th1}.

Посмотрим, что можно получить при ${\alpha=2}$ старым подходом.
Аналог утверждения~2
теоремы~\ref{Th1} справедлив и в этом случае: доказательство
проходит без изменений. А вот аналог утверждения~1 этой теоремы не
имеет места, ни с какими константами: условие
$\sigma_{f-{f}^*}(-\delta, \delta] = O(\delta^2)$ при
$\delta\rightarrow 0$ не является, вообще говоря, достаточным для
выполнения соотношения $\|P_{t,s}f-f^*\|_\mathcal{H}^2= O(t-s)^{-2}$
при $t-s\rightarrow\infty$ (см. замечание 2 в~\cite{JK}). Тем не
менее, справедлив следующий ослабленный вариант этого соотношения.

\begin{propos}\label{pr1}
    Пусть $f\in\mathcal{H}$ и ${\alpha=2}.$ Если для некоторой
    положительной константы $A$ при всех $\delta>0$ выполняется неравенство
    $$\sigma_{f-f^*}(-\delta,\delta]\leq A_f\delta^2,$$
    то при ${t\geq s+2}$
    $$
    \|P_{t,s}f-f^{*}\|^2_{\mathcal{H}} \leq B(t-s)^{-2}\ln (t-s),
    $$
    где можно положить $B= 8A+\frac4{\ln2}\|f-f^*\|_{\mathcal{H}}^2.$
\end{propos}

\begin{proof}[Доказательство предложения~\ref{pr1}] Считаем ${f^*=0},$
и пусть ${\tau=t-s}.$  По лемме~\ref{lmUpperEst} при
${0<\frac{2}{\tau}\leq1}$ получаем:
    $$
     \|P_{t,s}f\|^2_{\mathcal{H}}
    \leq  \frac{8}{\tau^2}\int \limits_{2\over \tau}^{\infty} x^{-3}\sigma_{f}(-x,x]dx
    \leq \frac{8}{\tau^2}\left( \ \int \limits_{2\over \tau}^{1} x^{-3}\sigma_{f}(-x,x]dx
    + \int \limits_{1}^{\infty} x^{-3}\sigma_{f}(-x,x]dx\right)\leq
    $$
    $$
    \leq \frac{8}{\tau^2}\left(A\int \limits_{2\over \tau}^{1} x^{-1}dx
    + \int \limits_{1}^{\infty} x^{-3}\|f\|^2_{\mathcal{H}}dx\right)
    =\frac{8A\ln({\tau})}{\tau^2}- \frac{8A\ln 2}{\tau^2} +\frac{4\|f\|_{\mathcal{H}}^2}{\tau^2}
    \leq \frac{8A\ln(\tau)}{\tau^2}+\frac{4\|f\|_{\mathcal{H}}^2}{\tau^2}.
    $$
\end{proof}

Рассмотрим теперь случай ${\alpha>2}$.
Следующее предложение~\ref{pr2} в качестве аналога
утверждения~1 теоремы~\ref{Th1} для случая ${\alpha>2}$ дает
достаточный признак степенной скорости сходимости с максимально
возможным показателем степени, равным~2.

\begin{propos}\label{pr2}
    Пусть $f\in\mathcal{H}$ и ${\alpha>2}.$ Если для некоторой
    положительной константы $A$ при всех $\delta>0$ выполняется неравенство
    $$
    \sigma_{f-f^*}(-\delta,\delta]\leq A_f\delta^\alpha,
    $$
    то
    при ${t\geq s+2}$
    $$
    \|P_{t,s}f-f^{*}\|^2_{\mathcal{H}} \leq B(t-s)^{-2},
    $$
    где можно положить $B=\frac{8}{\alpha-2}A+4\|f-f^*\|^2_{\mathcal{H}}.$
\end{propos}

\begin{proof}[Доказательство предложения~\ref{pr2}] Снова считаем, что ${f^*=0},$
и ${\tau=t-s}.$ По лемме~\ref{lmUpperEst} при
${0<\frac{2}{\tau}\leq1}$ получаем:
$$
 \|P_{t,s}f\|^2_{\mathcal{H}}
\leq \frac{8}{\tau^2}\int \limits_{2\over \tau}^{\infty}
x^{-3}\sigma_{f}(-x,x]dx\leq \frac{8}{\tau^2}\left( \ \int
\limits_{2\over \tau}^{1} x^{-3}\sigma_{f}(-x,x]dx + \int
\limits_{1}^{\infty} x^{-3}\sigma_{f}(-x,x]dx\right)\leq
$$
$$
\leq \frac{8}{\tau^2}\left(A\int \limits_{2\over \tau}^{1}
x^{-3+\alpha}dx + \int \limits_{1}^{\infty}
x^{-3}\|f\|^2_{\mathcal{H}}dx\right) =
\frac{8A}{(\alpha-2)\tau^2}\left(1-\frac{2^{\alpha-2}}{\tau^{\alpha-2}}\right)
+\frac{4\|f\|_{\mathcal{H}}^2}{\tau^2} =
$$
$$
=\frac{2^{1+\alpha}A}{(2-\alpha)\tau^{\alpha}}
+\left(\frac{8A}{\alpha-2}+4\|f\|_{\mathcal{H}}^2\right)\frac{1}{\tau^2}
\leq\left(\frac{8A}{\alpha-2}+4\|f\|^2_{\mathcal{H}}\right)\frac{1}{\tau^2}.
$$
\end{proof}

\section{Общий многомерный случай}

{\bf\thesection.1. Степенные равномерные сходимости на
подпространствах.} Следующая теорема является следствием
(естественной  переформулировкой на общий многомерный случай)
теоремы~\ref{Th1}.

\begin{theorem}\label{ThMain}
Пусть $\alpha \in [0,2), \mathcal{H}$ --- гильбертово пространство,
$\mathcal{X} \subseteq \mathcal{H}$ --- его векторное подпространство
со своей нормой  $\|\cdot\|_{\mathcal{X}}$. Тогда:

    1. Если существует положительная константа $A,$ такая, что
    для всех $f \in \mathcal{X}$ при всех $\delta>0$ выполняется неравенство
    $$
    \sigma_{f-f^*}(-\delta,\delta]\leq A \|f\|^2_{\mathcal{X}}\delta^\alpha,
    $$
    то имеет место степенная равномерная сходимость на пространстве $\mathcal{X}$ в теореме фон Неймана: при всех $t>s$
    $$
    \|P_{t,s} - P\|^2_{\mathcal{X} \to \mathcal{H}} \leq B(t-s)^{-\alpha},
    $$
    где можно положить $B = \frac{2^{\alpha+1}}{2-\alpha}A.$

    2. Если имеет место степенная равномерная сходимость на пространстве
    $\mathcal{X}$ в теореме фон Неймана,
    т.е. для некоторой положительной константы $B$ при всех $t>s$ выполняется неравенство
$$
\|P_{t,s} - P\|^2_{\mathcal{X} \to \mathcal{H}} \leq B(t-s)^{-\alpha},
$$
то для всех $f\in \mathcal{X}$ спектральная мера $\sigma_{f-f^*}$ имеет степенную особенность в
нуле (с тем же показателем степени), т.е. при всех ${\delta>0}$
$$
\sigma_{f-f^*}(-\delta,\delta]\leq A \|f\|^2_{\mathcal{X}}\delta^\alpha, \hbox { где }
A = \frac{\rho(\alpha)}{2^\alpha} B\left(\le \frac{\pi^{2-\alpha}}4 B\right).
$$
\end{theorem}

\begin{proof}[Доказательство теоремы~\ref{ThMain}]
Из первой части теоремы~\ref{Th1} следует, что
$$
\|(P_{t,s} - P)f\|^2_{\mathcal{H}} \leq B(t-s)^{-\alpha}\|f\|^2_{\mathcal{X}},
$$
 где
${B = \frac{2^{\alpha+1}}{2-\alpha}A.}$ Поэтому для всех ${t > s}$
    $$
    \|P_{t,s} - P\|^2_{\mathcal{X} \to \mathcal{H}}=
     \sup\limits_{f \in \mathcal{X} : f \neq 0} \frac{\|(P_{t,s} - P)f\|^2_\mathcal{H}}{\|f\|^2_{\mathcal{X}}} \leq B (t-s)^{-\alpha}.
     $$

Докажем вторую часть теоремы. Из равномерной сходимости
эргодических средних получаем, что для всех ${f\in\mathcal{X}}$
при всех ${t>s}$
        $$
        \frac{\|(P_{t,s} - P)f\|^2_\mathcal{H}}{\|f\|^2_{\mathcal{X}}} \leq B (t-s)^{-\alpha}.
        $$
        Тогда из второй части теоремы~\ref{Th1} следует, что для всех ${\delta>0}$
        $$
        \sigma_{f-f^*}(-\delta,\delta]\leq A\delta^\alpha \|f\|^2_{\mathcal{X}}, \hbox { где }
        A = \frac{\rho(\alpha)}{2^\alpha}B,
        $$
    что и требовалось доказать.
\end{proof}

В теореме~\ref{Th2+} раздела 4 ниже мы дадим критерий наличия
степенной с показателем ${\alpha=2}$ (максимально возможным --- по
замечанию~\ref{rm2} того же раздела 4) равномерной сходимости на
подпространствах. А пока переформулируем на многомерный случай
утверждения предложений~\ref{pr1} и \ref{pr2} предыдущего раздела.

\begin{propos}\label{pr3}
    Пусть $\alpha \geq 2 , \mathcal{H}$ --- гильбертово пространство,
    $\mathcal{X} \subseteq \mathcal{H}$ --- его векторное подпространство
    со своей нормой  $\|\cdot\|_{\mathcal{X}},$ которое непрерывно в
    него вложено, т.е. без ограничения общности считаем
     $\|\cdot\|_\mathcal{H} \leq \|\cdot\|_\mathcal{X}$.

     Если существует положительная константа $A,$ такая, что
     для всех $ f \in \mathcal{X}$  при всех $\delta>0$ выполняется неравенство
     $$
     \sigma_{f-f^*}(-\delta,\delta]\leq A \|f\|^2_{\mathcal{X}}\delta^2,
     $$
    то имеет место равномерная сходимость на пространстве $\mathcal{X}$ в теореме фон Неймана:

    1. В случае $\alpha=2$ при ${t\geq s+2}$
    $$
    \|P_{t,s} - P\|^2_{\mathcal{X} \to \mathcal{H}} \leq B\frac{\ln(t-s)}{(t-s)^2},
    $$
    где можно положить $B = 8A+\frac4{\ln2}.$

    2. В случае $\alpha>2$ при ${t\geq s+2}$
    $$
    \|P_{t,s} - P\|^2_{\mathcal{X} \to \mathcal{H}} \leq B(t-s)^{-2},
    $$
    где можно положить
    $B = \frac{8}{\alpha-2}A+4.$
\end{propos}


{\bf\thesection.2. Пространства ${\mathcal{X}_\alpha}$ и
$\mathcal{Y}$.} Для любого ${\alpha>0}$ обозначим через
$\mathcal{X}_\alpha$ множество всех~${f\in\mathcal{H}},$ у которых
спектральная мера~${\sigma_f}$ имеет степенную с
показателем~${\alpha>0}$ особенность в нуле, т.е.
$$
\mathcal{X}_\alpha = \{f\in \mathcal{H}|\ \exists A>0\ \forall
\delta>0\ \ \sigma_{f}(-\delta,\delta] \leq A\delta^\alpha\}.
$$

Покажем, что локальная степенная оценка на спектральную
меру эквивалентна глобальной степенной оценке.

\begin{lemma}\label{lmLoc}
     Пусть ${\alpha>0}$ и ${f\in \mathcal{H}}.$ Тогда следующие условия эквивалентны:

     (1) Существует константа ${A>0}$ такая, что  ${\sigma_{f}(-\delta,\delta]\leq A\delta^\alpha}$ для всех ${\delta>0};$

     (2) Существуют число ${r>0}$ и константа ${A_r>0}$ такие, что
     ${\sigma_{f}(-\delta,\delta]\leq A_r\delta^\alpha}$ для всех ${\delta \in (0,r)}.$
\end{lemma}

\begin{proof}[Доказательство леммы~\ref{lmLoc}] Переход ${(1)\Rightarrow(2)}$ очевиден; покажем
${(2)\Rightarrow(1)}.$ Используя равенство
${\sigma_{f}(\mathbb{R})=\|f\|^2_\mathcal{H}},$  для всех ${\delta
\geq r}$ получаем
$$
\sigma_{f}(-\delta,\delta]\leq\sigma_{f}(\mathbb{R})=\|f\|^2_{\mathcal{H}}\leq
\frac{\|f\|^2_\mathcal{H}}{r^\alpha}\delta^\alpha.
$$
Следовательно, для всех ${\delta>0}$
$$\sigma_{f}(-\delta,\delta]\leq A\delta^\alpha,$$
    где $A=\max\left\{A_r,\frac{\|f\|^2_\mathcal{H}}{r^\alpha}\right\}.$
\end{proof}

Нетрудно убедиться, что множества ${\mathcal{X}_\alpha}$ образуют
убывающую по включению цепь, а именно
${\mathcal{X}_{\alpha_1}\subseteq\mathcal{X}_{\alpha_2}}$ при
${\alpha_1>\alpha_2}.$ Действительно, пусть
${f\in\mathcal{X}_{\alpha_1}};$ тогда по лемме~\ref{lmLoc}
достаточно получить локальную степенную оценку.  Для всех
${\delta\in(0,1)}$ будет ${\sigma_f(-\delta,\delta]\leq
A\delta^{\alpha_1}\leq A\delta^{\alpha_2}};$ следовательно,
${f\in\mathcal{X}_{\alpha_2}}.$

\begin{propos}\label{pr4}
    ${\mathcal{X}_\alpha}$ для любого ${\alpha>0}$
     образует векторное подпространство в~$\mathcal{H}$,
     ортогональное подпространству неподвижных векторов группы
    ${\{U^t\}_{t\in \R}};$ в нем можно ввести норму
    $$
    ||f||_{\mathcal{X}_\alpha}^2 = \inf \{A | \forall  \delta >0   \ \sigma_{f}(-\delta,\delta] \leq A\delta^\alpha \}.
    $$
\end{propos}

\begin{proof}[Доказательство предложения~\ref{pr4}]
    Проверим справедливость всех трех аксиом нормы.

    1) Пусть  $f \in \mathcal{X}_\alpha$ такова, что $\|f\|^2_{\mathcal{X}_\alpha}=0$, т.е. $\sigma_{f}(-\delta,\delta]=0 $ для всех $\delta>0$.
    Покажем, что $f=0$. Для этого введем последовательность $\sigma_n = \sigma_{f}(-n,n].$ С одной стороны, $\lim\limits_{n\rightarrow\infty}\sigma_n=0 $.
    С другой стороны, $\lim\limits_{n\rightarrow\infty}\sigma_{f}(-n,n]=\sigma_{f}(\R)=(f,f)_\mathcal{H}.$
    Отсюда следует, что ${f=0}.$

    2) Покажем, что $\|\lambda f\|^2_{\mathcal{X}_\alpha}=|\lambda|^2\|f\|^2_{\mathcal{X}_\alpha}$ для всех ${f \in \mathcal{X}_\alpha, \lambda \in \mathbb{C}}.$
    При всех $\delta>0$
    $$\sigma_{\lambda f}(-\delta,\delta]=
    \int \limits_{(-\delta,\delta ]}d(E(x)\lambda f,\lambda f)_\mathcal{H}=
    |\lambda|^2  \int \limits_{(-\delta,\delta ]}d(E(x)f,f)_\mathcal{H}=
    |\lambda|^2\sigma_{f}(-\delta,\delta].$$
    Таким образом, мы получаем, что если ${\sigma_{f}(-\delta,\delta] \leq A_f\delta^\alpha},$ то $\sigma_{\lambda f}(-\delta,\delta] \leq |\lambda|^2A_f\delta^\alpha.$
    Беря инфимум по таким $A_f,$ мы получаем, что $\|\lambda f\|^2_{\mathcal{X}_\alpha} \leq|\lambda|^2\|f\|^2_{\mathcal{X}_\alpha}.$

    Без труда проверяется, что из этого неравенства автоматически следует равенство, а именно:
    $$\|f\|_{\mathcal{X}_\alpha} = \|\lambda \frac{1}{\lambda}f\|_{\mathcal{X}_\alpha} \leq |\lambda| \| \frac{1}{\lambda}f\|_{\mathcal{X}_\alpha} \leq |\lambda| |\frac{1}{\lambda}|\|f\|_{\mathcal{X}_\alpha}.$$

    3) Пусть $f,g \in \mathcal{X}_\alpha ;$ тогда $\sigma_{f}(-\delta,\delta] \leq \|f\|^2_{\mathcal{X}_\alpha} \delta^\alpha$ и
    $\sigma_{g}(-\delta,\delta] \leq \|g\|^2_{\mathcal{X}_\alpha}\delta^\alpha$
    при всех ${\delta>0}.$ Используя неравенство ${|\sigma_{f,g}(-\delta,\delta]|^2\leq
    \sigma_{f}(-\delta,\delta]\sigma_{g}(-\delta,\delta]},$
    справедливое для всех~${f,g\in\mathcal{H}}$ (см.,
    например,~\cite[5.5]{Gl03}), получаем:
    $$
    \sigma_{f+g}(-\delta,\delta]=\sigma_{f}(-\delta,\delta] + \sigma_{f, g}(-\delta,\delta]+\sigma_{g, f}(-\delta,\delta]+\sigma_{g}(-\delta,\delta] \leq
    $$
    $$
    \leq\sigma_{f}(-\delta,\delta]  +\sqrt{\sigma_{f}(-\delta,\delta]\sigma_{g}(-\delta,\delta]}
    +\sqrt{\sigma_{g}(-\delta,\delta]\sigma_{f}(-\delta,\delta]} +\sigma_{g}(-\delta,\delta]\leq
    $$
    $$
    \leq\|f\|^2_{\mathcal{X}_\alpha}\delta^\alpha+ 2\|f\|_{\mathcal{X}_\alpha}\|g\|_{\mathcal{X}_\alpha}\delta^\alpha
    +\|g\|^2_{\mathcal{X}_\alpha}\delta^\alpha = (\|f\|_{\mathcal{X}_\alpha}+\|g\|_{\mathcal{X}_\alpha})^2\delta^\alpha.
    $$
    Следовательно, $f+g \in \mathcal{X}_\alpha,$ и
    $$
    ||f+g||^2_{\mathcal{X}_\alpha} \leq(||f||_{\mathcal{X}_\alpha}+\|g\|_{\mathcal{X}_\alpha})^2 ,
    $$
    что и требовалось.

Нетрудно видеть, что для любого вектора ${f\in\mathcal{X}_\alpha}$
значение его спектральной меры в нуле
${\sigma_f(\{0\})=\|f^*\|^2_\mathcal{H}=0}.$ Поэтому ${Pf=f^*=0}.$
\end{proof}

Определяя ${\mathcal{X}_0}$ аналогично ${\mathcal{X}_\alpha}$, т.е.
при ${\alpha=0},$ нетрудно видеть, что тогда
$\mathcal{X}_0=\mathcal{H}$ и
${\|\cdot\|_{\mathcal{X}_0}=\|\cdot\|_\mathcal{H}}.$ Рассмотрим еще
одно подпространство~$\mathcal{Y},$ состоящее из векторов образа
${\mathcal{R}(\mathbf{B})}$ генератора $\mathbf{B}$ с нормой,
определяемой равенством
$$
\|f\|_{\mathcal{Y}}=\sup_{t>0}\left\|\int_0^tU^\tau
f\,d\tau\right\|_\mathcal{H},\ \ f\in\mathcal{R}(\mathbf{B}).
$$
Хорошо известно~\cite[лемма VIII.1.7]{DSh1}, что
${\|f\|_{\mathcal{Y}}<\infty}$ для любого вектора
${f\in\mathcal{R}(\mathbf{B})},$ а именно для ${g\in
\mathrm{Dom}(\mathbf{B})}$ и всех ${t>0}$
$$
\left\|\int_0^tU^\tau
\mathbf{B}g\,d\tau\right\|_\mathcal{H}=\|(U^t-I)g\|_\mathcal{H}\leq2\|g\|_\mathcal{H}.
$$
Для некоторых полугрупп верно и обратное утверждение: если
${\|f\|_{\mathcal{Y}}<\infty}$, то ${f\in\mathcal{R}(\mathbf{B})}.$
В частности, для унитарной группы это будет справедливо ---
см.~\cite[теорема 2.6]{KL84}, где доказано это свойство для
двойственных (состоящих из банахово сопряженных операторов) сильно
непрерывных полугрупп. Нетрудно также проверить справедливость всех
свойств нормы для ${\|\cdot\|_{\mathcal{Y}}}.$

{\bf\thesection.3. Характеризация подпространств со степенной
равномерной сходимостью.} С помощью
подпространств~${\mathcal{X}_\alpha}, {\alpha\in[0,2)}$ и
$\mathcal{Y}$ можно описать все вложенные в $\mathcal{H}$
нормированные пространства, на которых имеется равномерная степенная
сходимость в теореме фон Неймана. Напомним, что (замечание~\ref{rm2}
раздела 4) такой скорости сходимости с показателем степени больше 2
не бывает.

\begin{theorem}\label{ThCharacterizationOfSpaces}
Пусть ${\alpha \in [0,2]},$ $\mathcal{X} \subseteq \mathcal{H}$ ---
его векторное подпространство со своей
нормой~$\|\cdot\|_{\mathcal{X}}.$ На пространстве $\mathcal{X}$
будет равномерная степенная с показателем $\alpha$ сходимость в
теореме фон Неймана тогда и только тогда, когда $I-P$ является
непрерывным  вложением $\mathcal{X}$ в ${\mathcal{X}_\alpha}$ в
случае ${\alpha\in[0,2)},$ и, соответственно, непрерывным вложением
$\mathcal{X}$ в $\mathcal{Y}$ при ${\alpha=2}.$
\end{theorem}

\begin{proof}[Доказательство теоремы~\ref{ThCharacterizationOfSpaces}]
Рассмотрим сначала случай ${\alpha\in[0,2)}.$
Пусть существует константа ${B>0}$ такая, что ${\|P_{t,s} -
P\|^2_{\mathcal{X} \to \mathcal{H}} \leq B(t-s)^{-\alpha}}$ для всех
${t>s}.$

Из доказательства второго утверждения теоремы~\ref{ThMain} следует,
что при ${\alpha\in[0,2)}$ (и при ${\alpha=2}$) для всех ${f\in
\mathcal{X}}$ спектральная мера вектора $f-f^*=(I-P)f$ имеет
степенную с показателем $\alpha$ особенность, а именно, для всех
${\delta>0}$
$$
\sigma_{(I-P)f}(-\delta,\delta]\leq \frac{\rho(\alpha)}{2^\alpha} B
\|f\|^2_{\mathcal{X}}\delta^\alpha.
$$
Следовательно, ${(I-P)f\in\mathcal{X}_\alpha}$ и
$\|(I-P)f\|^2_{\mathcal{X}_\alpha}\leq\frac{\rho(\alpha)}{2^\alpha}
B \|f\|^2_{\mathcal{X}}$ для всех $f\in \mathcal{X},$
что и требовалось доказать.

Докажем утверждение в обратную сторону. Пусть
${(I-P)\mathcal{X}\subseteq\mathcal{X}_\alpha}$ и существует
положительная константа $A$ такая, что
${\|I-P\|^2_{\mathcal{X}\to\mathcal{X}_\alpha}\leq A}.$ Тогда для
любого ${\delta>0}$
$$
\sigma_{(I-P)f}(-\delta,\delta]\leq\|(I-P)f\|^2_{\mathcal{X}_\alpha}\delta^\alpha\leq
A\|f\|^2_\mathcal{X}\delta^\alpha,
$$
и первое утверждение теоремы~\ref{ThMain} дает при всех $t>s$ требуемую оценку
$$
\|P_{t,s}-P\|^2_{\mathcal{X}\to\mathcal{H}}\leq\frac{2^{\alpha+1}}{2-\alpha}A(t-s)^{-\alpha}.
$$
Пусть теперь ${\alpha=2}$ и существует константа ${B>0}$ такая, что
для всех ${t>s}$ выполняется неравенство ${\|P_{t,s} -
P\|^2_{\mathcal{X} \to \mathcal{H}} \leq B(t-s)^{-2}}.$ Полагая
${s=0},$ получаем при всех $t>0$ оценку
$$
\left\|\int_0^tU^\tau (I-P)f\,d\tau\right\|^2_\mathcal{H}\leq
B\|f\|^2_\mathcal{X}.
$$
Следовательно, ${(I-P)f\in\mathcal{Y}}$ и
${\|(I-P)f\|^2_\mathcal{Y}\leq B\|f\|^2_\mathcal{X}},$ что и
требовалось доказать. В обратную сторону: пусть
${(I-P)\mathcal{X}\subseteq\mathcal{Y}}$ и существует положительная
константа $A$ такая, что ${\|I-P\|^2_{\mathcal{X}\to\mathcal{Y}}\leq
A}.$ Тогда при всех ${t>s}$
\begin{multline*}
\|P_{t,s}f-Pf\|^2_\mathcal{H}
=\frac{1}{(t-s)^2}\left\|\int_s^tU^\tau(I-P)fd\tau
\right\|^2_\mathcal{H}=\frac{1}{(t-s)^2}\left\|U^s\int_0^{t-s}U^\tau(I-P)fd\tau
\right\|^2_\mathcal{H}=\\
=\frac{1}{(t-s)^2}\left\|\int_0^{t-s}U^\tau(I-P)fd\tau
\right\|^2_\mathcal{H}\leq\frac{1}{(t-s)^2}\|(I-P)f\|^2_\mathcal{Y}
\leq A\|f\|^2_{\mathcal{X}}(t-s)^{-2}.
\end{multline*}
\end{proof}

\begin{remark}[М. Лин \cite{Lin74}] \label{rm0}
Равномерная сходимость в теореме фон Неймана на всем пространстве
$\mathcal{H}$ имеет место тогда и только тогда, когда образ
$\mathcal{R}(\mathbf{B})$ генератора $\mathbf{B}$ группы будет
замкнутым.
\end{remark}

При этом ${\mathcal{H}=\mathcal{R}(\mathbf{B})\oplus\{x: U^tx=x\
\forall t>0\}},$ сужение~$\mathbf{B}_1$ генератора $\mathbf{B}$ на
$\mathrm{Dom}\,(\mathbf{B})\cap\mathcal{R}(\mathbf{B})$ будет
обратимым, и ${\|\mathbf{B}_1^{-1}\|<\infty}.$ Поэтому (см.
доказательство в~\cite{Lin74}) в рассматриваемом случае скорость сходимости
будет максимально возможной. А именно,
для любого вектора ${f\in\mathcal{H}},$ поскольку
${(I-P)f\in\mathcal{R}(\mathbf{B})=\mathcal{R}(\mathbf{B}_1)},$
при всех ${t>s}$ будет
\begin{multline*}
\|P_{t,s}f-Pf\|_\mathcal{H}=\frac{1}{t-s}\left\|U^s\int_0^{t-s}U^\tau(I-P)fd\tau
\right\|_\mathcal{H}=\frac{1}{t-s}\left\|\int_0^{t-s}U^\tau(I-P)fd\tau
\right\|_\mathcal{H}=\\
=\frac{1}{t-s}\|(U^{t-s}-I)\mathbf{B}^{-1}(I-P)f\|_\mathcal{H}
\leq\frac{2}{t-s}\|\mathbf{B}_1^{-1}\|\|(I-P)f\|_\mathcal{H}
\leq\frac{2\|\mathbf{B}_1^{-1}\|\|f\|_\mathcal{H}}{t-s}.
\end{multline*}

Этот критерий можно переформулировать в терминах существования
спектрального пробела ("spectral gap"); см. также обсуждение для
общих полугрупп в~\cite[1.2.15]{Em07}.

\begin{remark}\label{rm0+}
Равномерная сходимость в теореме фон Неймана на всем пространстве
$\mathcal{H}$ имеет место тогда и только тогда, когда существует
${\gamma>0}$ такое, что во множестве
${(-\gamma,\gamma)\setminus\{0\}}$ нет точек спектра
оператора~$\mathbf{B}.$
\end{remark}

Действительно, если сходимость равномерная на всем пространстве, то
по предыдущему замечанию спектр генератора~$\mathbf{B}$ совпадает со
спектром оператора~${\mathbf{B}_1}$ в объединении с точкой 0 (если
$\{x: U^tx=x\ \forall t>0\}=\mathrm{ker}\,\mathbf{B}\neq\{0\}$).
Поскольку для оператора ${\mathbf{B}_1}$ точка 0 является регулярным
значением (ввиду того, что
$\mathcal{R}(\mathbf{B}_1)=\mathcal{R}(\mathbf{B})$ и
$\|\mathbf{B}^{-1}_1\|<\infty$), а множество регулярных значений для
замкнутого оператора открыто, то найдется проколотая окрестность
точки ноль, содержащая только регулярные значения генератора.

Обратно, если есть такое ${\gamma>0,}$ то, используя
представление~(1), получим для каждого вектора ${f\in\mathcal{H}}$
при всех ${t>s}$ оценку
\begin{align*}
\|P_{t,s}f-Pf\|^2_\mathcal{H}&=\frac{4}{(t-s)^2}\int_{\mathbb{R}\setminus\{0\}}\frac{\sin^2(\frac{(t-s)x}{2})}{x^2}\,d\sigma_f(x)=\\
&=\frac{4}{(t-s)^2}\int_{\mathbb{R}\setminus(-\gamma,\gamma)}\frac{\sin^2(\frac{(t-s)x}{2})}{x^2}\,d\sigma_f(x)
\leq\frac{4\|f\|^2_\mathcal{H}}{(t-s)^2\gamma^2},
\end{align*}
что и завершает доказательство замечания 2.

Поскольку генератор унитарной группы, порожденной потоком
сохраняющих меру автоморфизмов, имеет свою специфику (см.,
например,~\cite{Lem17}), нам здесь интересно описание потоков, для
которых имеется равномерная сходимость в теореме фон Неймана на всем
пространстве. В дискретном случае известно, что такая ситуация имеет
место только для периодического автоморфизма (см. обсуждение
в~\cite{GHT}). В следующем замечании содержится аналог этого
утверждения для непрерывного времени.

Пусть ${\mathbf{T}=\{T^t\}_{t\in\mathbb{R}}}$ --- поток, действующий
на пространстве Лебега ${(\Omega,\mu)}$ с неатомической
мерой~$\mu,$ и ${U_{\mathbf{T}}^tf=f\circ T^t}$ --- его группа
унитарных операторов Купмана в ${L_2(\Omega,\mu)}.$ Пусть
$\mathcal{P}(\omega)$ --- период точки~${\omega\in\Omega}$
относительно потока ${\mathbf{T}},$ т.е. такое минимальное число
$t>0,$ что ${T^t\omega=\omega};$ полагаем для непериодических точек
${\mathcal{P}(\omega)=\infty}$.

\begin{remark}\label{rm0++}
Равномерная сходимость в теореме фон Неймана для унитарной
группы~$\{{U_{\mathbf{T}}^t}\}_{t\in\mathbb{R}}$ на всем
пространстве~${L_2(\Omega,\mu)}$ имеет место тогда и только
тогда, когда ${\mathcal{P}\in L_\infty(\Omega,\mu)}.$
\end{remark}

Действительно, если имеется равномерная сходимость на всем
пространстве, то по замечанию~\ref{rm0+} спектр генератора группы
имеет гэп. Тогда $\mathcal{P}\in
L_\infty(\Omega,\mu)$, поскольку иначе, как показано в~\cite{G75}
(случай апериодического потока см. также в~\cite{N79}) спектр генератора
есть все $\mathbb{R}.$

Обратно, пусть ${\|\mathcal{P}\|_\infty<\infty}.$ Тогда
${f^*(\omega)=\frac{1}{\mathcal{P}(\omega)}\int_0^{\mathcal{P}(\omega)}f(T^\tau\omega)\,d\tau}$
для п.в. $\omega\in\Omega$;
полагая $0<t-s=N\mathcal{P}(\omega)+r, \, 0\le r<\mathcal{P}(\omega), N\in\mathbb{N}\cup\{0\},$
для всех ${t>s}$ получаем
$$
\|P_{t,s}f-f^*\|_2=
\left\|\frac{1}{t-s}\int_0^{t-s}\!\!\!f(T^{\tau+s}\omega)\,d\tau-f^*(\omega)\right\|_2=
\left\|\frac{1}{t-s}\int_0^{t-s}\!\!\!f(T^{\tau}\omega)\,d\tau-f^*(\omega)\right\|_2=
$$
$$
\\
=\left\|\frac{1}{t-s}\sum_{k=0}^{N-1}\int_{k\mathcal{P}(\omega)}^{(k+1)\mathcal{P}(\omega)}\!\!f(T^\tau\omega)\,d\tau+
\frac{1}{t-s}\int_{N\mathcal{P}(\omega)}^{t-s}\!\!f(T^\tau\omega)\,d\tau-\frac{1}{\mathcal{P}(\omega)}\int_0^{\mathcal{P}(\omega)}\!\!f(T^\tau\omega)\,d\tau\right\|_2=
$$
$$
\\
=\left\|\left(\frac{N}{t-s}-\frac{1}{\mathcal{P}(\omega)}\right)\int_0^{\mathcal{P}(\omega)}f(T^\tau\omega)\,d\tau+
\frac{1}{t-s}\int_{0}^{r}f(T^\tau\omega)\,d\tau\right\|_2\leq
$$
$$\\
\leq\frac{2}{t-s}\left\|\int_0^{\mathcal{P}(\omega)}|f(T^\tau\omega)|\,d\tau\right\|_2\leq
\frac{2}{t-s}\left\|\int_0^{\|\mathcal{P}\|_\infty}|f(T^\tau\omega)|\,d\tau\right\|_2\leq\frac{2\|\mathcal{P}\|_\infty\|f\|_2}{t-s}.
$$

Если функция ${\mathcal{P}}$ не будет существенно ограниченной, а
лишь интегрируемой с некоторой степенью, то можно получить
равномерную сходимость с максимальной скоростью на соответствующих классических
подпространствах интегрируемых функций. А именно, справедливо следующее утверждение.

\begin{remark}\label{rm0+++} Пусть ${\mathcal{P}\in
L_{2q}(\Omega,\mu)}$ для ${1\leq q<\infty}.$ Тогда на
пространстве~${L_{2p}(\Omega,\mu)},$ где
${\frac{1}{q}+\frac{1}{p}=1},$ будет иметь место равномерная
степенная сходимость с показателем ${\alpha=2}$ в теореме фон
Неймана для группы~$\{{U_{\mathbf{T}}^t}\}_{t\in\mathbb{R}}.$
\end{remark}

Действительно, полагая
${f^{**}(\omega)=\sup\limits_{t>0}\frac{1}{t}\int_0^t|f(T^\tau\omega)|\,d\tau}$
и применяя неравенство Гёльдера, из выкладок предыдущего замечания
получаем для всех ${t-s>0}$
$$
\|P_{t,s}f-f^*\|_2
\leq\frac{2}{t-s}\left\|\int_0^{\mathcal{P}(\omega)}|f(T^\tau\omega)|\,d\tau\right\|_2
\leq \frac{2}{t-s}\left\|\mathcal{P}(\omega)f^{**}(\omega)\right\|_2
\leq\frac{2\|\mathcal{P}\|_{2q}\|f^{**}\|_{2p}}{t-s}.
$$
При $q>1,p<\infty$ доказываемое утверждение следует отсюда по доминантному неравенству
${\|f^{**}\|_{2p}\leq\frac{2p}{2p-1}\|f\|_{2p}};$ при $q=1,p=\infty$
работает очевидное неравенство $\|f^{**}\|_\infty\leq \|f\|_\infty$.


{\bf\thesection.4. О неулучшаемости константы
$\frac{\rho(\alpha)}{2^\alpha}$ в оценках теорем~\ref{Th1} и~\ref{ThMain}.}
Еще раз воспользуемся
подпространствами ${\mathcal{X}_\alpha}$ из пункта 3.2 выше.
На гильбертовом пространстве
${\mathcal{H}=L_2(\mathbb{R})}$ рассмотрим группу унитарных
операторов умножения
$$
U^tf(x)=e^{ixt}\cdot f(x),\ f\in L_2(\mathbb{R}).
$$
Легко определяется инфинитезимальный генератор $\mathbf{B}$ этой
группы
--- это самосопряженный оператор умножения на $x$ с естественной
областью определения, т.е.
$$
\mathbf{B}f(x)=x\cdot f(x), \ f\in \mathrm{Dom}\,\mathbf{B}=\{g\in
L_2(\mathbb{R}):\ xg(x)\in L_2(\mathbb{R})\}.
$$
Для любого борелевского подмножества ${C\subseteq\mathbb{R}}$
спектральные проекторы определяются равенством
${E(C)f(x)=\chi_C(\mathbf{B})f(x)=\chi_{C}(x)\cdot f(x)}.$ Тогда
$$
\sigma_f(C)=(E(C)f,f)=\int_\mathbb{R}\chi_C(x)|f(x)|^2\,dx=\int_C|f(x)|^2\,dx,
$$
т.е. спектральная мера $\sigma_f$ абсолютно непрерывна и имеет
плотность $|f|^2.$ Пространства ${\mathcal{X}_\alpha}$ в этом случае
описываются следующим образом:
$$
\mathcal{X}_\alpha=\left\{f\in L_2(\mathbb{R}):\
\sup\limits_{\delta>0}\frac{1}{\delta^\alpha}\int_{-\delta}^\delta|f(x)|^2\,dx<\infty\right\}.
$$
Представителями этого пространства являются, например, функции вида
$f(x)=\chi_{(a,b)}(x)|x|^{\alpha/2}, {b>a>0}.$ Действительно, для
любого ${\delta>0}$ получим
$$
\sigma_f(-\delta,\delta]=\left\{\begin{array}{ll}
                                0&0<\delta\leq a\\
                                \frac{\delta^{\alpha+1}-a^{\alpha+1}}{\alpha+1} &a<\delta\leq b\\
                                \frac{b^{\alpha+1}-a^{\alpha+1}}{\alpha+1}
                                &b<\delta.
                               \end{array}\right.
$$
Отсюда заключаем, что
${\sigma_f(-\delta,\delta]
    \leq\frac{b^{\alpha+1}-a^{\alpha+1}}{b^\alpha}\frac{\delta^\alpha}{\alpha+1}}$
для всех ${\delta>0}.$ Обозначим через
${\tilde{\mathcal{X}}_\alpha}$ векторное подпространство линейных
комбинаций таких функций, т.е.
$$
\tilde{\mathcal{X}}_\alpha=\{f\in L_2(\mathbb{R}):\
f(x)=|x|^{\alpha/2}\sum_{i=1}^n c_i\chi_{(a_i,b_i)}, n\in\mathbb{N},
a_1<b_1\leq a_2<b_2\leq\ldots\leq a_n<b_n\}.
$$
С помощью этого подпространства покажем, что константу
${\frac{\rho(\alpha)}{2^\alpha}}$ во вторых утверждениях
теорем 1 и 2 нельзя уменьшить. А именно, справедливо следующее
утверждение.


\begin{propos}\label{pr5}
Существуют гильбертово пространство $\mathcal{H},$ унитарная
группа $\{U^t\}_{t\in\R},$ действующая в $\mathcal{H},$ и
векторное подпространство ${\mathcal{X}\subseteq\mathcal{H}}$ со своей
нормой ${\|\cdot\|_\mathcal{X}},$ такие, что:

1) $\|P_{t,s}-P\|^2_{\mathcal{X}\to\mathcal{H}}=B(t-s)^{-\alpha}$
для всех ${t>s};$

2) для любого ${\varepsilon\in(0,1)}$ найдутся функция
${f\in\mathcal{X}}$ и ${\delta>0},$ для которых
$$
\sigma_{f-f^*}(-\delta,\delta]>
\varepsilon\frac{\rho(\alpha)}{2^\alpha}B\|f\|^2_\mathcal{X}\delta^\alpha.
$$
\end{propos}

\begin{proof}[Доказательство предложения~\ref{pr5}] Возьмем
${\mathcal{H}=L_2(\mathbb{R})},$ группу ${U^tf(x)=e^{itx}\cdot
f(x)}$ и подпространство ${\mathcal{X}=\tilde{\mathcal{X}}_\alpha},
\alpha\in[0,2].$ Зададим норму в $\mathcal{X}$ равенством
$$
\|f\|_\mathcal{X}:=\|f(x)|x|^{-\alpha/2}\|_2.
$$
Поскольку единственным неподвижным вектором относительно
рассматриваемой группы является нулевая функция, то ${P=0}.$ Найдем
соответствующую норму оператора $P_{t,s},$  где
$$
P_{t,s}f=\frac{1}{t-s}\int_s^tU^\tau
f(x)\,d\tau=\frac{e^{itx}-e^{isx}}{ix(t-s)}f(x).
$$
Полагая ${2\tau=t-s},$ получим
\begin{align*}
\|P_{t,s}f\|^2_\mathcal{H}&=\int_\mathbb{R}\frac{4\sin^2(x(t-s)/2)}{x^2(t-s)^2}|f(x)|^2\,dx
=\sum_{i=1}^n|c_i|^2\int_{a_i}^{b_i}F_{t-s}(x)|x|^\alpha\,dx=\\
&=\frac{1}{\tau^{\alpha+1}}\sum_{i=1}^n|c_i|^2\int_{\tau a_i}^{\tau
b_i}\frac{\sin^2y}{y^{2-\alpha}}\,dy
\leq\frac{1}{\tau^{\alpha}}\sum_{i=1}^n|c_i|^2(b_i-a_i)\sup\limits_{x\in(\tau
a_i,\tau b_i)}\frac{\sin^2y}{y^{2-\alpha}}\leq\\
&\leq\frac{1}{\tau^{\alpha}}\sup\limits_{x>0}\frac{\sin^2y}{y^{2-\alpha}}
\sum_{i=1}^n|c_i|^2(b_i-a_i)=\frac{2^\alpha}{\rho(\alpha)}\|f\|^2_{\mathcal{X}}(t-s)^{-\alpha}.
\end{align*}
Таким образом,
$\|P_{t,s}\|^2_{\mathcal{X}\to\mathcal{H}}\leq\frac{2^\alpha}{\rho(\alpha)}(t-s)^{-\alpha}.$
На самом деле будет равенство, поскольку супремум (в определении
операторной нормы) будет достигаться при ${\nu\to0+}$ на семействе
функций
${f_\nu(x)=|x|^{\alpha/2}\chi_{(\frac{r-\nu}{\tau},\frac{r+\nu}{\tau})}(x)},$
где $\rho(\alpha)=\frac{r^{2-\alpha}}{\sin^2r}.$

Зафиксируем теперь ${\varepsilon\in(0,1)}.$ Нужно предъявить функцию
$f\in\mathcal{X}$ и ${\delta>0}$ такие, что
${\sigma_f(-\delta,\delta]>\varepsilon\|f\|^2_\mathcal{X}\delta^\alpha}.$

Возьмем $f(x)=|x|^{\alpha/2}\chi_{(1,1+\nu)}$ и ${\delta=1+\nu};$
тогда (как мы уже выше вычисляли)
${\sigma_f(-\delta,\delta]=\frac{(1+\nu)^{\alpha+1}-1}{\alpha+1}}$ и
${\|f\|^2_\mathcal{X}=\nu}.$ Следовательно, остается доказать, что
найдется достаточно малое число ${\nu=\nu(\varepsilon)>0},$ такое,
что
$$
\frac{(1+\nu)^{\alpha+1}-1}{\alpha+1}>\varepsilon \nu(1+\nu)^\alpha.
$$
Применяя теорему Лагранжа о среднем значении к функции $x^{1+\alpha}$ на отрезке
$[1,1+\nu],$ для некоторого ${c\in(1,1+\nu)}$ последнее неравенство перепишем в виде
${c^\alpha>\varepsilon(1+\nu)^\alpha}.$  При ${\alpha=0}$ неравенство справедливо для
любого ${\nu>0}.$ При ${\alpha\in(0,2]}$ заведомо подходят все
${\nu>0},$ удовлетворяющие неравенству
${1\geq\varepsilon(1+\nu)^\alpha},$ т.е.
${\nu\leq\varepsilon^{-1/\alpha}-1}.$
\end{proof}


 \section{Приложения}

{\bf\thesection.1.} Как уже упоминалось во введении, в
статье~\cite{JP} была доказана следующая теорема (с приложениями к
получению оценок убывания временных средних решений уравнения
Шредингера, и линейных волновых уравнений).

\begin{theorem}\label{ThMorisse+BenArt}

    Пусть $\mathcal{X} \subset \mathcal{H}$ --- банахово пространство, которое плотно в $\mathcal{H}$, непрерывно в него вложено, т.е. $\|\cdot \|_\mathcal{H} \leq \|\cdot\|_\mathcal{X} $, и обладает следующими  свойствами:

    1) существуют число $r \in (0,1)$ и функция $\psi:[-r,r] \to \mathbb{R},$ которая является почти всюду строго положительной на отрезке $I_r=[-r,r]$ и ограничивает сверху плотность спектральной меры на множестве  $I_r\setminus \{0\}:$

    $|\frac{d}{d\lambda}(E(\lambda)f,g)| \leq \psi(\lambda)\|f\|_\mathcal{X}\|g\|_\mathcal{X} $ для всех $f,g \in \mathcal{X}$ для всех $\lambda \in I_r\setminus \{0\};$

    2) существует число $q>0$ такое что $|\lambda|^{-q}\psi(\lambda) \in L_1(I_r).$

    Тогда при $l=\min\{q,2\}$ имеет место степенная равномерная сходимость на пространстве $\mathcal{X}$ в теореме фон Неймана:
    \begin{center}
    $\|P_{t,-t}-P\|_{\mathcal{X} \to \mathcal{H}} \leq C{t^{-l/2}}$ для всех $t>1,$
    \end{center}
    где можно положить $C= \sqrt{\Psi_q(r)+\frac{1}{r^2}}, \, \Psi_q(r) = \int_{I_r}|\lambda|^{-q}\psi(\lambda)d\lambda.$
\end{theorem}

\begin{remark}\label{rm1}
     Из условий теоремы~\ref{ThMorisse+BenArt} вытекает
     локальная степенная с показателем $q$ оценка на спектральную меру,
     а именно: для всех $\delta \in (0,r)$
\begin{align*}
\sigma_{f-f^*}(-\delta,\delta]&=\sigma_f(-\delta,\delta]-\|f^*\|^2_\mathcal{H}=\\
&=\int \limits_{(-\delta,\delta ]\setminus\{0\}}d(E(\lambda)
f,f)=\int \limits_{(-\delta,\delta]\setminus\{0\}}\frac{d}{d\lambda}(E(\lambda)f,f)d\lambda\leq\\
&\leq\int\limits_{(-\delta,\delta]\setminus\{0\}}\psi(\lambda)\|f\|^2_\mathcal{X}d\lambda
=||f||^2_\mathcal{X}\int \limits_{(-\delta,\delta]\setminus\{0\}}|\lambda|^q|\lambda|^{-q}\psi(\lambda)d\lambda\leq\\
&\leq||f||^2_\mathcal{X}\delta^q\int_{I_r}|\lambda|^{-q}\psi(\lambda)d\lambda=\|f\|^2_\mathcal{X}\Psi_q(r)\delta^q.
\end{align*}
\end{remark}


{\bf\thesection.2. Локальная степенная особенность спектральной
меры.} Теорема~\ref{ThMorisse+BenArt} и замечание~\ref{rm1} приводят
к естественной задаче построения аналогов теоремы~\ref{ThMain} и
предложения~\ref{pr3} в случае, когда имеется только локальная
степенная оценка для спектральной меры.
Лемма~\ref{lmLoc} позволяет легко перенести наши результаты
на аналогичный случай локальных оценок.

\begin{theorem}\label{ThMain+} Пусть $\alpha \geq 0, \mathcal{H}$ --- гильбертово пространство,
$\mathcal{X} \subseteq \mathcal{H}$ --- его векторное подпространство
со своей нормой  $\|\cdot\|_{\mathcal{X}},$ которое непрерывно в
него вложено, т.е. без ограничения общности считаем $\|\cdot\|_\mathcal{H} \leq
\|\cdot\|_\mathcal{X}$.

    Если для некоторого $r>0$ существует положительная константа $A,$ такая, что
    для всех $ f \in \mathcal{X}$ при всех $\delta \in (0,r)$ выполняется неравенство
    $$
    \sigma_{f-f^*}(-\delta,\delta]\leq A\|f\|^2_{\mathcal{X}}\delta^\alpha,
    $$
    то имеет место равномерная сходимость на пространстве $\mathcal{X}$
    в теореме фон Неймана: при $D=\max\{A,\frac{1}{r^\alpha}\}$

    1) в случае $\alpha\in [0,2)$ при всех $t>s$
    $$
    \|P_{t,s} - P\|^2_{\mathcal{X} \to \mathcal{H}} \leq B(t-s)^{-\alpha},
    \hbox { где можно положить } B = \frac{2^{\alpha+1}}{2-\alpha}D;
    $$

    2) в случае ${\alpha=2}$ при ${t\geq s+2}$
    $$
    \|P_{t,s} - P\|^2_{\mathcal{X} \to \mathcal{H}} \leq B\frac{\ln(t-s)}{(t-s)^2},
    \hbox { где можно положить } {B = 8D+\frac4{\ln 2}};
    $$

    3) в случае $\alpha>2$ при ${t\geq s+2}$
    $$
    \|P_{t,s} - P\|^2_{\mathcal{X} \to \mathcal{H}} \leq B(t-s)^{-2},
    \hbox { где можно положить } B=\frac{8}{\alpha-2}D+4.
    $$
    \end{theorem}

{\bf\thesection.3. Борелевские меры на прямой.}  Как показывает
замечание~\ref{rm1}, теорема~\ref{ThMorisse+BenArt} при ${q\neq2}$
является, с точностью до значений получающихся констант, очень
частным случаем теоремы~\ref{ThMain+}.
Например, тут не требуется ни банаховости вложенного нормированного
пространства $\mathcal{X},$ ни плотности вложения, ни (локальной)
абсолютной непрерывности спектральных мер относительно меры Лебега.
Кроме того, следуя формулировкам фон Неймана~\cite{Neum}, наши теоремы
работают для общих усреднений $P_{t,s}$ (для  $P_{t,0},$ например
--- а не только для $P_{t,-t},$ как в~\cite{JP}), и содержат
утверждения о неулучшаемости получаемых асимптотик скоростей
сходимости.

При этом оценка в случае ${q=2}$ в условиях
теоремы~\ref{ThMorisse+BenArt} оказывается точнее, чем в пункте~2
теоремы~\ref{ThMain+}. Оказывается, дело тут в специфической
постановке задачи: в~\cite{JP} особенность спектральной меры в нуле
оценивалась в терминах конечности интеграла от ее плотности $\phi
(x)$, помноженной на степенную функцию $|x|^{-q}.$ В заключительной
части статьи проанализируем взаимосвязи этих двух и других возможных
подходов к постановке задачи.

Рассуждения проведем в общем случае, введя для всех ${q\geq0}$
следующие классы конечных борелевских мер $\mu$ на вещественной
прямой.

Пусть $\mathcal{K}^{q}_1$ содержит все конечные борелевские
меры $\mu$ на $\mathbb{R}$, абсолютно непрерывные относительно меры
Лебега на множестве $[-r,r]\setminus\{0\}$ для некоторого
${r\in(0,1)},$ плотность которых ${\phi(x)}$ удовлетворяет
условию ${\int_{-r}^r|x|^{-q}\phi(x)\,dx<\infty}.$

Класс $\mathcal{K}^{q}_2$ содержит все конечные борелевские меры $\mu$
на $\mathbb{R}$ для которых
$$
{\int_{\mathbb{R}\setminus\{0\}}|x|^{-q}\,d\mu(x)<\infty}.
$$

Класс $\mathcal{K}^{q}_3$ содержит все конечные борелевские меры $\mu$
на $\mathbb{R}$ для которых
$$
\int_{\mathbb{R}\setminus\{0\}}\frac{\sin^2(\tau x)}{(\tau
x)^2}\,d\mu(x)\leq B\tau^{-q}
$$
для всех ${\tau>0}$ и некоторой константы $B>0.$

Класс $\mathcal{K}^{q}_4$ содержит все конечные борелевские меры $\mu$
на $\mathbb{R}$ для которых
$$
{\mu\{(-\delta,\delta]\setminus\{0\}\}\leq A\delta^\alpha}
$$
для всех ${\delta>0}$ и некоторой константы $A>0.$

Ясно, что классы
${\mathcal{K}^{0}_2=\mathcal{K}^{0}_3=\mathcal{K}^{0}_4}$ и
совпадают со множеством всех конечных борелевских мер на
$\mathbb{R}.$ Класс ${\mathcal{K}^{0}_1}$ состоит из всех локально
(в проколотой окрестности нуля) абсолютно непрерывных мер. Как легко
заметить, в теореме~\ref{ThMorisse+BenArt} рассматриваются
спектральные меры из класса ${\mathcal{K}^{q}_1}$ и по сути
доказывается, что эти меры лежат в классе
${\mathcal{K}^{\min\{q,2\}}_3}.$ В наших же теоремах рассматриваются
спектральные меры из класса~${\mathcal{K}^{q}_4}$ и тоже
доказывается, что они лежат в классе ${\mathcal{K}^{q}_3},$ но
только для ${q\neq2}.$ Для ${q=2}$ вложение классов, как показано в
следующей теореме, имеет место в другую сторону.


\begin{theorem}\label{ThMeasures}  Cправедливы следующие включения:

$(1)\ {\mathcal{K}^{q}_1\subsetneq\mathcal{K}^{q}_2}$ для любого
${q>0};$

$(2)\ {\mathcal{K}^{q}_2\subsetneq\mathcal{K}^{\min\{q,2\}}_3}$
для любого ${q\neq2},$ и
${\mathcal{K}^{2}_2=\mathcal{K}^{2}_3};$

$(3)\ \mathcal{K}^{q}_2\subsetneq\mathcal{K}^{q}_4$
 и ${\mathcal{K}^{q+p}_4\subsetneq\mathcal{K}^{q}_2}$ для любых ${q,p>0};$

$(4)\ {\mathcal{K}^{q}_3=\mathcal{K}^{q}_4}$ для ${q\in[0,2)},$
и
${{\mathcal{K}^{q}_4\subsetneq\mathcal{K}^{2}_3}\subsetneq\mathcal{K}^{2}_4}$
для ${q>2};$

$(5)\ {\mathcal{K}^{q}_3}$ для любого ${q>2}$ состоит из атомических
мер с носителем в точке $0$.
\end{theorem}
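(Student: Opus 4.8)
The plan is to treat the five parts essentially separately, from a short toolbox: the elementary bound $\sin^2u/u^2\le|u|^{-q}$ for $u\ne0$, $q\in[0,2]$; the convergence of $C_q:=\int_0^\infty\sin^2u\,u^{q-3}\,du$ for $q\in(0,2)$, together with the divergences $\int_1^\infty\sin^2u\,u^{-1}\,du=+\infty$ and $\int_0^T\sin^2u\,u^{-1}\,du=\tfrac12\ln T+O(1)$ as $T\to\infty$; a Riemann--Stieltjes integration by parts giving $\int_{0<|x|\le1}|x|^{-q}\,d\mu=\mu(\{0<|x|\le1\})+q\int_0^1t^{-q-1}\mu(\{0<|x|\le t\})\,dt$; and, for the delicate $q=2$ equality, the averaging estimate $\tfrac1T\int_T^{2T}\sin^2(\tau x)\,d\tau=\tfrac12-\tfrac{\sin4Tx-\sin2Tx}{4Tx}\ge\tfrac14$ whenever $|x|\ge2/T$. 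All strict inclusions use two families of witnesses: the absolutely continuous $\mu_q=\chi_{[-1,1]}(x)\,|x|^{q-1}\,dx$ (with $\mu_q((-\delta,\delta])\le\tfrac2q\delta^q$, $\int|x|^{-q}\,d\mu_q=+\infty$, and $\int|x|^{-2}\,d\mu_q<\infty\iff q>2$), and purely atomic measures $\sum_k m_k\delta_{2^{-k}}$ with weights $m_k$ chosen per case. The only cross-references are that (2) for $q>2$ and (4) invoke $\mathcal{K}^2_2=\mathcal{K}^2_3$ from (2), and (4) invokes $\mathcal{K}^{q+p}_4\subseteq\mathcal{K}^q_2$ from (3); throughout $q>0$ in (1)--(3).

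Part (1): splitting $\mathbb{R}\setminus\{0\}$ at $\pm r$ gives $\mathcal{K}^q_1\subseteq\mathcal{K}^q_2$, and $\mu=\sum_{n\ge1}n^{-q-2}\delta_{1/n}\in\mathcal{K}^q_2$ (since $\int|x|^{-q}\,d\mu=\sum n^{-2}$) has atoms accumulating at $0$, hence lies in no $\mathcal{K}^q_1$. Part (2): $\sin^2(\tau x)/(\tau x)^2\le|\tau x|^{-q}$ gives $\mathcal{K}^q_2\subseteq\mathcal{K}^q_3$ for $q\in(0,2)$, and $\le(\tau x)^{-2}$ gives $\mathcal{K}^2_2\subseteq\mathcal{K}^2_3$; for the reverse at $q=2$ one multiplies the defining inequality by $\tau^2$, averages over $\tau\in[T,2T]$, uses the displayed estimate to get $\int_{|x|\ge2/T}x^{-2}\,d\mu\le4B$, and lets $T\to\infty$, so $\mathcal{K}^2_3=\mathcal{K}^2_2$; for $q>2$, $\int|x|^{-q}\,d\mu<\infty$ forces $\int|x|^{-2}\,d\mu<\infty$, whence $\mathcal{K}^q_2\subseteq\mathcal{K}^2_2=\mathcal{K}^2_3$. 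Strictness for $q\ne2$ is witnessed by $\mu_q$, which lies in $\mathcal{K}^q_3$ (for $q<2$ by $\int\sin^2(\tau x)/(\tau x)^2\,d\mu_q=\tfrac2{\tau^q}\int_0^\tau\sin^2u\,u^{q-3}\,du\le2C_q\tau^{-q}$; for $q>2$ since $\int|x|^{-2}\,d\mu_q<\infty$) but not in $\mathcal{K}^q_2$. Part (3): Chebyshev, $\mu((-\delta,\delta]\setminus\{0\})\le\delta^q\int_{(-\delta,\delta]\setminus\{0\}}|x|^{-q}\,d\mu$, gives $\mathcal{K}^q_2\subseteq\mathcal{K}^q_4$, strict via $\mu_q$; the integration-by-parts identity turns $\mu(\{0<|x|\le t\})\le At^{q+p}$ into $\int_{0<|x|\le1}|x|^{-q}\,d\mu\le\mu(\mathbb{R})+qA/p<\infty$, giving $\mathcal{K}^{q+p}_4\subseteq\mathcal{K}^q_2$, strict via $\mu=\sum_k k\,2^{-k(q+p)}\delta_{2^{-k}}$ (here $\int|x|^{-q}\,d\mu=\sum_k k2^{-kp}<\infty$ while $2^{N(q+p)}\mu((-2^{-N},2^{-N}])\ge N\to\infty$).

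Part (4): the equality $\mathcal{K}^q_3=\mathcal{K}^q_4$ for $q\in[0,2)$ is Theorem~\ref{Th1} in disguise. Any finite $\mu$ with $\mu(\{0\})=0$ equals $\sigma_f=\sigma_{f-f^*}$ for $\mathcal{H}=L_2(\mathbb{R},\mu)$, $U^tg(x)=e^{itx}g(x)$, $f\equiv1$; after the harmless rescaling $t-s\leftrightarrow2\tau$, membership $\mu\in\mathcal{K}^q_4$ is the hypothesis $\sigma_{f-f^*}(-\delta,\delta]\le A\delta^q$ and membership $\mu\in\mathcal{K}^q_3$ is the conclusion $\|P_{t,s}f-f^*\|^2_{\mathcal{H}}\le B(t-s)^{-q}$ of Theorem~\ref{Th1} (equivalently, re-run Lemmas~\ref{lmUpperEst} and~\ref{lmLowerEst} for $\mu$). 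For $q>2$: $\mathcal{K}^q_4=\mathcal{K}^{2+(q-2)}_4\subseteq\mathcal{K}^2_2=\mathcal{K}^2_3$ by (3) and (2), strict via $\mu=\sum_k k^{-2}2^{-2k}\delta_{2^{-k}}\in\mathcal{K}^2_2$ with $2^{Nq}\mu((-2^{-N},2^{-N}])\ge N^{-2}2^{N(q-2)}\to\infty$; and $\mathcal{K}^2_3=\mathcal{K}^2_2\subseteq\mathcal{K}^2_4$, strict via $\mu_2\in\mathcal{K}^2_4$ with $\int|x|^{-2}\,d\mu_2=+\infty$ (equivalently $\int F_\tau\,d\mu_2=\tfrac8{\tau^2}\int_0^{\tau/2}\sin^2u\,u^{-1}\,du\sim4\tau^{-2}\ln\tau$, the logarithmic loss of Proposition~\ref{pr1}). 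Part (5): for $q>2$ the class $\mathcal{K}^q_3$ is degenerate --- multiplying its defining inequality by $\tau$ and integrating over $\tau\ge1$, Tonelli gives $\tfrac B{q-2}\ge\int_{\mathbb{R}\setminus\{0\}}x^{-2}\bigl(\int_{|x|}^\infty\sin^2u\,u^{-1}\,du\bigr)\,d\mu=+\infty$ unless $\mu(\mathbb{R}\setminus\{0\})=0$. Hence $\delta_{x_0}$, $x_0\ne0$, witnesses non-locality: its restriction to $(-r,r)$ with $r<|x_0|$ is the zero measure, which lies in $\mathcal{K}^q_3$, whereas $\delta_{x_0}\notin\mathcal{K}^q_3$ (already at $\tau_k=(2k+1)\pi/(2|x_0|)$ one has $\sin^2(\tau_kx_0)/(\tau_kx_0)^2=4/((2k+1)\pi)^2$, not $O(\tau_k^{-q})$).

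The main obstacle is the borderline equality $\mathcal{K}^2_2=\mathcal{K}^2_3$ in (2): the inclusion $\mathcal{K}^2_3\subseteq\mathcal{K}^2_2$ is not pointwise and requires the averaging argument above, and it is this $q=2$ behaviour that also drives the strict inclusions in (4) and the degeneracy in (5). Everything else amounts to choosing the right witness measure and one integration by parts.
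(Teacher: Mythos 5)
Your proposal is correct, and its skeleton matches the paper's: the inclusions come from the same pointwise bound $\sin^2u/u^2\le|u|^{-q}$ (the paper keeps the sharper constant $1/\rho(q)$), the Chebyshev estimate, and the layer-cake identity for $\mathcal{K}^{q+p}_4\subseteq\mathcal{K}^{q}_2$; the key equality $\mathcal{K}^{2}_2=\mathcal{K}^{2}_3$ is proved by the same averaging-in-$\tau$ device (the paper averages over $(0,T)$, computes $1-\frac{\sin(2Tx)}{2Tx}$ and lets $T\to\infty$; you average over $[T,2T]$ and localize to $|x|\ge 2/T$ --- both work); and item (4) for $q\in[0,2)$ is, as you say, Theorem~\ref{Th1} read for an abstract finite measure, which the paper does by repeating the proofs of Lemmas~\ref{lmUpperEst} and~\ref{lmLowerEst} with $\mu=\sigma_{f-f^*}$, while your explicit realization $\mathcal{H}=L_2(\mathbb{R},\mu)$, $U^tg=e^{itx}g$, $f\equiv1$ makes that reduction formal (just add the one-line remark that an atom of $\mu$ at $0$ affects neither $\mathcal{K}^q_3$ nor $\mathcal{K}^q_4$, so assuming $\mu(\{0\})=0$ is harmless). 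The genuine differences are two. First, for item (5) the paper argues via Lemma~\ref{lmLast}, an $L_2(\mu)$ trigonometric argument showing that even $o(1)$ decay of $\int\sin^2(\tau x)x^{-2}\,d\mu$ forces $\mu(\mathbb{R}\setminus\{0\})=0$; you instead multiply the defining inequality by $\tau$, integrate over $\tau\ge1$ and use Tonelli with the divergence of $\int^{\infty}\sin^2u\,u^{-1}\,du$. Your route is shorter and self-contained, while the paper's lemma is stronger (it needs no power rate) and is reused in Remark~\ref{rm2}; your closing remark about $\delta_{x_0}$ and ``non-locality'' is superfluous, since the assertion is precisely that members of $\mathcal{K}^q_3$, $q>2$, charge nothing outside $\{0\}$, which you have already established. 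Second, your witnesses differ: in (1) you take the purely atomic $\sum_n n^{-q-2}\delta_{1/n}$ instead of the Cantor-type measure $x^q\chi_{[0,1]}\,dk(x)$, and in (3) your measure $\sum_k k\,2^{-k(q+p)}\delta_{2^{-k}}$ depends on $p$, whereas the paper's single $x^{q-1}|\ln x|^{-2}\chi_{[0,1/2]}\,dx$ lies in $\mathcal{K}^q_2\setminus\bigcup_{p>0}\mathcal{K}^{q+p}_4$ and so settles strictness uniformly in $p$; both choices suffice for the statement, and you in fact spell out the witnesses for $\mathcal{K}^q_2\subsetneq\mathcal{K}^q_4$ and $\mathcal{K}^2_3\subsetneq\mathcal{K}^2_4$ that the paper leaves implicit.
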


\begin{proof}[Доказательство теоремы~\ref{ThMeasures}]
(1) Пусть мера ${\mu\in\mathcal{K}^{q}_1};$ тогда
\begin{align*}
\int_{\mathbb{R}\setminus\{0\}}|x|^{-q}\,d\mu(x)&
=\int_{[-r,r]\setminus\{0\}}|x|^{-q}\,d\mu(x)+\int_{|x|>r}|x|^{-q}\,d\mu(x)\leq\\
&\leq\int_{[-r,r]\setminus\{0\}}|x|^{-q}\phi(x)\,dx+\int_{|x|>r}r^{-q}\,d\mu(x)\leq\\
&=\int_{[-r,r]\setminus\{0\}}|x|^{-q}\phi(x)\,dx+r^{-q}\mu(\mathbb{R})<\infty,
\end{align*}
т.е. ${\mu\in\mathcal{K}^{q}_2}.$ В качестве меры
${\mu\in\mathcal{K}^{q}_2\setminus\mathcal{K}^{q}_1}$ можно
взять меру ${d\mu(x)=x^q\chi_{[0,1]}(x)dk(x)},$ где $dk(x)$ --- мера
на отрезке $[0,1],$ порожденная функцией Kантора $k(x).$

(2) Пусть мера ${\mu\in\mathcal{K}^{q}_2}$ для ${q\in[0,2]};$
тогда для любого ${\tau>0}$
\begin{align*}
\int_{\mathbb{R}\setminus\{0\}}\frac{\sin^2(\tau x)}{(\tau
x)^2}\,d\mu(x)&=\int_{\mathbb{R}\setminus\{0\}}\frac{\sin^2(\tau
x)}{(\tau |x|)^{2-q}}|\tau
x|^{-q}\,d\mu(x)\leq\\
&\leq\sup\limits_{y>0}\frac{\sin^2y}{y^{2-q}}\int_{\mathbb{R}\setminus\{0\}}|\tau
x|^{-q}\,d\mu(x)=
\frac{\tau^{-q}}{\rho(q)}\int_{\mathbb{R}\setminus\{0\}}|x|^{-q}\,d\mu(x).
\end{align*}

Пусть теперь ${q>2};$ тогда для любого ${\tau>0}$
\begin{align*}
\int_{\mathbb{R}\setminus\{0\}}\frac{\sin^2(\tau x)}{(\tau
x)^2}\,d\mu(x)&\leq\int_{[-1,1]\setminus\{0\}}\frac{1}{(\tau
x)^2}\,d\mu(x)+\int_{|x|>1}\frac{1}{(\tau x)^2}\,d\mu(x)\leq\\
&\leq\int_{[-1,1]\setminus\{0\}}\frac{1}{\tau^2|x|^q}\,d\mu(x)
+\int_{|x|>1}\frac{1}{\tau^2}\,d\mu(x)\leq\\
&\leq\tau^{-2}\left(\int_{\mathbb{R}\setminus\{0\}}|x|^{-q}\,d\mu(x)+\mu(\mathbb{R})\right).
\end{align*}
Таким образом, для любого ${q\geq0}$ справедливо включение
${\mathcal{K}^{q}_2\subset\mathcal{K}^{\min\{q,2\}}_3}.$ При
этом для ${q\neq2}$ мера ${d\mu(x)=x^{q-1}\chi_{[0,1]}dx}$ лежит в
${\mathcal{K}^{\min\{q,2\}}_3\setminus\mathcal{K}^{q}_2}.$

Покажем теперь равенство
${\mathcal{K}^{2}_2=\mathcal{K}^{2}_3}.$ Для этого достаточно
показать обратное включение. Пусть ${\mu\in\mathcal{K}^{2}_3},$
т.е.
$$
{\int_{\mathbb{R}\setminus\{0\}}\frac{\sin^2(\tau
x)}{x^2}\,d\mu(x)\leq B}
$$
для всех ${\tau>0}$ и некоторого ${B>0}.$
Интегрируя это неравенство по $\tau$ в интервале ${(0,T)}, T>0$ и
переходя ко второму повторному интегралу (по теореме Тонелли мы
можем это сделать), получим
\begin{align*}
2B\geq&\frac{2}{T}\int_0^T\left(\int_{\mathbb{R}\setminus\{0\}}\frac{\sin^2(\tau
x)}{x^2}\,d\mu(x)\right)d\tau=\frac{2}{T}\int_{\mathbb{R}\setminus\{0\}}x^{-2}\left(\int_0^T\sin^2(\tau
x)\,d\tau\right)d\mu(x)=\\
&=\frac{1}{T}\int_{\mathbb{R}\setminus\{0\}}x^{-2}\left(\int_0^T1-\cos(2\tau
x)\,d\tau\right)d\mu(x)=\int_{\mathbb{R}\setminus\{0\}}x^{-2}\left(1-\frac{\sin(2Tx)}{2Tx}\right)\,d\mu(x).
\end{align*}
Переходя к нижнему пределу при ${T\to+\infty},$ по лемме Фату
получаем необходимое неравенство
${2B\geq\int_{\mathbb{R}\setminus\{0\}}x^{-2}\,d\mu(x)},$ т.е.
${\mu\in\mathcal{K}^{2}_2}.$ Отметим, что идею этого рассуждения
мы почерпнули из~\cite{Rob} (см. также~\cite{Leo}).

(3) Пусть ${\mu\in\mathcal{K}^{q}_2}$ для ${q\geq0};$ тогда для
любого ${\delta>0}$
\begin{align*}
\mu\{(-\delta,\delta]\setminus\{0\}\}&
=\int\limits_{(-\delta,\delta]\setminus\{0\}}d\mu(x)
=\int\limits_{(-\delta,\delta]\setminus\{0\}}|x|^q|x|^{-q}d\mu(x)\leq\\
&\leq\delta^q\int\limits_{(-\delta,\delta]\setminus\{0\}}|x|^{-q}d\mu(x)
\leq\delta^q\int\limits_{\mathbb{R}\setminus\{0\}}|x|^{-q}d\mu(x),
\end{align*}
т.е. ${\mu\in\mathcal{K}^{q}_4}.$

Пусть теперь мера
${\mu\in\mathcal{K}^{q+p}_4}$ для ${q,p>0};$ тогда
\begin{align*}
\int_{\mathbb{R}\setminus\{0\}}|x|^{-q}d\mu(x)&
=\int_{\mathbb{R}\setminus\{0\}}q\int_0^{|x|^{-1}}t^{q-1}dtd\mu(x)=
q\int_0^{+\infty}t^{q-1}\int_{0<|x|<1/t}d\mu(x)dt=\\
&=q\int_0^{+\infty}t^{q-1}\mu\{(-1/t,1/t)\setminus\{0\}\}dt\leq\\
&\leq
q\mu(\mathbb{R})\int_0^{1}t^{q-1}dt+qA\int_1^{+\infty}t^{q-1}t^{-p-q}dt
=\mu(\mathbb{R})+A\frac{q}{p}<\infty,
\end{align*}
т.е. ${\mu\in\mathcal{K}^{q}_2}.$ В качестве меры
${\mu\in\mathcal{K}^{q}_2\setminus\bigcup_{p>0}\mathcal{K}^{q+p}_4}$
можно взять меру $${d\mu(x)=x^{q-1}|\ln
x|^{-2}\chi_{[0,1/2]}(x)dx}.$$

(4) Доказательство равенства классов
${\mathcal{K}^{q}_3=\mathcal{K}^{q}_4}$ для ${q\in[0,2)}$
дословно соответствует доказательству теоремы~\ref{Th1}, где
рассматривались спектральные меры $\mu=\sigma_{f-f^*}.$ Далее, для
${q=2+p>2},$ используя уже доказанные включения в пунктах (1) и (2),
получаем
$$
\mathcal{K}^{q}_4\subsetneq\mathcal{K}^{2+p/2}_2\subseteq\mathcal{K}^{2}_3.
$$
Включение ${\mathcal{K}^{2}_3\subset\mathcal{K}^{2}_4}$ также
уже было доказано в теореме~\ref{Th1} с использованием
леммы~\ref{lmLowerEst}.

(5) Доказываемое утверждение немедленно следует из следующей леммы~\ref{lmLast}.
\end{proof}

\begin{lemma}\label{lmLast}
Пусть ${\int_{\mathbb{R}\setminus\{0\}}\frac{\sin^2(\tau
x)}{x^2}\,d\mu(x)=o(1)}$ при ${\tau\to+\infty}.$ Тогда
${\mu(\mathbb{R}\setminus\{0\})=0}.$
\end{lemma}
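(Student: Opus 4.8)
The plan is to reuse the averaging argument already carried out in part~(2) of the proof of Theorem~\ref{ThMeasures}, feeding in a vanishing bound in place of the constant one. Put $g(\tau)=\int_{\mathbb{R}\setminus\{0\}}\frac{\sin^2(\tau x)}{x^2}\,d\mu(x)$, so that $g\ge0$ and, by hypothesis, $g(\tau)\to0$ as $\tau\to+\infty$. Since $\mu$ is a finite measure and $\sin^2(\tau x)\le\min\{\tau^2x^2,\,1\}$, the function $g$ is locally integrable: $\int_0^{T}g(\tau)\,d\tau\le\frac{T^3}{3}\mu(\mathbb{R})+T\mu(\mathbb{R})<\infty$ for every $T>0$. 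Consequently $g(\tau)\to0$ forces $\frac1T\int_0^{T}g(\tau)\,d\tau\to0$ as $T\to+\infty$ by an elementary splitting of $[0,T]$ at a point beyond which $g$ is small.

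Next I would evaluate this Ces\`aro average by Tonelli's theorem, exactly as in part~(2), using $\frac2T\int_0^{T}\sin^2(\tau x)\,d\tau=1-\frac{\sin(2Tx)}{2Tx}$:
$$
\frac{2}{T}\int_0^{T}g(\tau)\,d\tau
=\frac{2}{T}\int_{\mathbb{R}\setminus\{0\}}x^{-2}\Bigl(\int_0^{T}\sin^2(\tau x)\,d\tau\Bigr)d\mu(x)
=\int_{\mathbb{R}\setminus\{0\}}x^{-2}\Bigl(1-\frac{\sin(2Tx)}{2Tx}\Bigr)d\mu(x).
$$
The left-hand side tends to $0$, hence so does the right-hand side. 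The integrand on the right is nonnegative (because $|\sin u|\le|u|$) and, for each fixed $x\neq0$, converges to $x^{-2}$ as $T\to+\infty$. Fatou's lemma therefore yields
$$
\int_{\mathbb{R}\setminus\{0\}}\frac{d\mu(x)}{x^2}\le\liminf_{T\to+\infty}\int_{\mathbb{R}\setminus\{0\}}x^{-2}\Bigl(1-\frac{\sin(2Tx)}{2Tx}\Bigr)d\mu(x)=0.
$$
Since $x^{-2}>0$ at every point of $\mathbb{R}\setminus\{0\}$, this is only possible if $\mu(\mathbb{R}\setminus\{0\})=0$, which is the assertion of the lemma.

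There is no genuine obstacle here: the only points needing a line of care are the local integrability of $g$ (so that the averaged integral is well defined) and the passage from $g(\tau)\to0$ to $\frac1T\int_0^Tg\to0$, both of which follow from the finiteness of $\mu$ and the bound $\sin^2u\le\min\{u^2,1\}$. As in part~(2), one may instead invoke the Riemann--Lebesgue-type result of \cite{Rob} (cf.~\cite{Leo}) in place of the explicit Fatou step.
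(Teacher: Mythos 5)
Your proof is correct, but it takes a genuinely different route from the paper's own proof of Lemma~\ref{lmLast}. You recycle the Ces\`aro-averaging computation from part~(2) of the proof of Theorem~\ref{ThMeasures} (the identity $\frac{2}{T}\int_0^T\sin^2(\tau x)\,d\tau=1-\frac{\sin(2Tx)}{2Tx}$, Tonelli, then Fatou), upgrading the hypothesis there from a bounded to a vanishing integral; the only extra ingredients are the local integrability in $\tau$ of $g(\tau)=\int_{\mathbb{R}\setminus\{0\}}x^{-2}\sin^2(\tau x)\,d\mu(x)$ and the elementary fact that $g(\tau)\to0$ forces its Ces\`aro means to tend to $0$, both of which you supply (note that finiteness of $\mu$, which is the standing assumption on the measures of Section~4, is what makes these two steps and Tonelli legitimate). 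This yields $\int_{\mathbb{R}\setminus\{0\}}x^{-2}\,d\mu=0$ and hence $\mu(\mathbb{R}\setminus\{0\})=0$. The paper argues instead inside $L_2(\mathbb{R}\setminus\{0\},\mu)$: the hypothesis reads $\bigl\|\frac{\sin(\tau x)}{x}\bigr\|_2=o(1)$; multiplying by the bounded factors $\sin(\tau_0x)$ and $\cos(\tau_0x)$, using the addition formula for $\sin((\tau+\tau_0)x)$ and the triangle inequality, one also gets $\bigl\|\frac{\cos(\tau x)\sin(\tau_0x)}{x}\bigr\|_2=o(1)$, whence by $\sin^2+\cos^2=1$ the identity $\int_{\mathbb{R}\setminus\{0\}}x^{-2}\sin^2(\tau_0x)\,d\mu(x)=0$ for every $\tau_0>0$; taking $\tau_0=1$ and $\tau_0=\pi$ and using that $\sin^2x+\sin^2(\pi x)>0$ for $x\neq0$ finishes the argument. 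Your approach buys uniformity with the proof of $\mathcal{K}^{2}_2=\mathcal{K}^{2}_3$ and in fact needs only the Ces\`aro averages of $g$ to vanish, a formally weaker hypothesis; the paper's approach avoids any discussion of integrability in $\tau$ and of interchange of limits, resting instead on a short qualitative $L_2$ identity argument.
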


\begin{proof}[Доказательство леммы~\ref{lmLast}] В терминах нормы в пространстве ${L_2(\mathbb{R}\setminus\{0\},\mu)}$ условие леммы
переписывается как ${\left\|\frac{\sin(\tau x)}{x}\right\|_2=o(1)}$
при ${\tau\to+\infty}.$ Тогда для любого ${\tau_0>0}$ будут
выполняться также следующие три асимптотические соотношения при
${\tau\to+\infty}:$
$$
\left\|\frac{\sin(\tau x)\sin(\tau_0x)}{x}\right\|_2=o(1),\ \
\left\|\frac{\sin(\tau x)\cos(\tau_0x)}{x}\right\|_2=o(1),\ \
\left\|\frac{\sin((\tau+\tau_0) x)}{x}\right\|_2=o(1).
$$
Из второго и третьего соотношений, раскладывая синус суммы и используя
неравенство треугольника для нормы, получаем
$$
{\left\|\frac{\cos(\tau x)\sin(\tau_0x)}{x}\right\|_2=o(1)}.
$$
Возводя это в квадрат и сложив с первым из трех соотношений, получаем равенство
$$
\int_{\mathbb{R}\setminus\{0\}}\frac{\sin^2(\tau_0x)}{x^2}d\mu(x)=0,
$$
справедливое для любого ${\tau_0>0}.$ Следовательно,
$$
{\int_{\mathbb{R}\setminus\{0\}}\frac{\sin^2(x)+\sin^2(\pi
x)}{x^2}d\mu(x)=0}.
$$
Поскольку подынтегральная функция положительна,
то ${\mu(\mathbb{R}\setminus\{0\})=0}.$
\end{proof}

{\bf\thesection.4. Максимально возможная скорость сходимости:
степенная с $\alpha=2$.}
Следующее замечание хорошо известно (и может быть доказано и для действий групп
линейных изометрий в негильбертовых пространствах  --- см., например,~\cite{Se14}).

\begin{remark}\label{rm2}
Из леммы~\ref{lmLast} и интегрального представления (1) немедленно следует,
что скорость сходимости $O((t-s)^{-2})$ в эргодической теореме фон Неймана
является максимально возможной, т.е. что асимптотическое соотношение
$\|P_{t,s}f-f^*\|_\mathcal{H}^2= o((t-s)^{-2})$ при $t-s\rightarrow\infty$
выполняется только в вырожденном случае $f=f^*.$
\end{remark}

С учетом замечания~\ref{rm2}, из доказанного в пункте~2
теоремы~\ref{ThMeasures} равенства
${\mathcal{K}^{2}_2=\mathcal{K}^{2}_3}$ немедленно получается
следующий спектральный критерий максимально возможной скорости
сходимости в эргодической теореме фон Неймана:
$\|P_{t,s}f-f^*\|_\mathcal{H}^2= O(t-s)^{-2}$ при
$t-s\rightarrow\infty$ тогда и только тогда, когда конечен интеграл
$\int_{\mathbb{R}}x^{-2}\,d\sigma_{f-f^*}(x).$ Мы сформулируем этот
результат в трех вариантах: для одномерных подпространств
(теорема~\ref{Th2} --- аналог  теоремы~\ref{Th1}), для многомерных
подпространств (теорема~\ref{Th2+} --- аналог теоремы~\ref{ThMain}),
и теорема~\ref{Th2++}, позволяющая дать локальный вариант
теоремы~\ref{Th2+}.

\begin{theorem}\label{Th2}
    Зафиксируем ${f \in \mathcal{H}}$. Тогда:

    1. Если
    $$
    \int_{\mathbb{R}}x^{-2}\,d\sigma_{f-f^*}(x)=A<\infty,
    $$
    то скорость сходимости эргодических средних $P_{t,s}f$ --- степенная с
    показателем степени 2, т.е. при всех $t > s$
    $$
    \|P_{t,s}f-f^{*}\|^2_{\mathcal{H}} \leq B(t-s)^{-2},
    $$
    где можно положить ${B=4A}.$

    2. Если скорость сходимости эргодических средних $P_{t,s}f$ ---
    степенная с показателем степени~2, т.е. если для некоторой
    положительной константы $B$ при всех ${t > s}$ выполняется неравенство
    $$
    \|P_{t,s}f-f^*\|_\mathcal{H}^2\leq B(t-s)^{-2},
    $$
    то
    $$
    \int_{\mathbb{R}}x^{-2}\,d\sigma_{f-f^*}(x)\leq A,
    $$
    где можно положить ${A=8B}.$
\end{theorem}

\begin{proof}[Доказательство теоремы~\ref{Th2}]
Если $\int_{\mathbb{R}}x^{-2}\,d\sigma_{f-f^*}(x)= A<\infty,$ то
(см. доказательство пункта~2 теоремы~\ref{ThMeasures}) по представлению (1)
при всех ${t>s}$
$$
\|P_{t,s}f-f^*\|_\mathcal{H}^2=
{\int_{\mathbb{R}}
\left(\frac{\sin\frac{(t-s)x}{2}}{\frac{(t-s)x}{2}}\right)^2
d\sigma_{f-f^*}(x)}\leq
\frac{(\frac{t-s}{2})^{-2}}{\rho(2)}\int_{\mathbb{R}}x^{-2}\,d\sigma_{f-f^*}(x)
= 4A(t-s)^{-2}.
$$

С другой стороны, если для некоторой константы $B>0$ при всех
${t>s}$ будет
$$
\|P_{t,s}f-f^*\|_\mathcal{H}^2
\leq B(t-s)^{-2},
$$
то это неравенство с учетом  представления (1) переписывается в виде
$$
\int_{\mathbb{R}}\left(\frac{\sin\frac{(t-s)x}{2}}{\frac{(t-s)x}{2}}\right)^2
    d\sigma_{f-f^*}(x)
\leq4B\left(\frac{t-s}2\right)^2,
$$
и по оценкам доказательства пункта 2 теоремы~\ref{ThMeasures}
получаем $ \int_{\mathbb{R}}x^{-2}\,d\sigma_{f-f^*}(x)\leq 8B$.
\end{proof}

\begin{theorem}\label{Th2+}
    Пусть $\mathcal{H}$ --- гильбертово пространство,
    $\mathcal{X} \subseteq \mathcal{H}$ --- его векторное подпространство
    со своей нормой  $\|\cdot\|_{\mathcal{X}}$. Тогда:

    1. Если существует положительная константа $A,$ такая, что
    для всех $f \in \mathcal{X}$ выполняется неравенство
    $$
    \int_{\mathbb{R}}x^{-2}\,d\sigma_{f-f^*}(x)\leq A\|f\|^2_{\mathcal{X}},
    $$
    то имеет место степенная с показателем~2 равномерная сходимость на пространстве $\mathcal{X}$ в теореме фон Неймана: при всех ${t>s}$
    $$
    \|P_{t,s} - P\|^2_{\mathcal{X} \to \mathcal{H}} \leq B(t-s)^{-2},
    $$
    где можно положить ${B=4A}.$

    2. Если имеет место степенная с показателем~2 равномерная сходимость на пространстве
    $\mathcal{X}$ в теореме фон Неймана,
    т.е. для некоторой положительной константы $B$ при всех ${t>s}$ выполняется неравенство
    $$
    \|P_{t,s} - P\|^2_{\mathcal{X} \to \mathcal{H}} \leq B(t-s)^{-2},
    $$
    то для всех $f\in \mathcal{X}$
    $$
    \int_{\mathbb{R}}x^{-2}\,d\sigma_{f-f^*}(x)
    \leq A \|f\|^2_{\mathcal{X}},
    $$
    где можно положить ${A=8B}.$
\end{theorem}

\begin{proof}[Доказательство теоремы~\ref{Th2+}] В условиях первой части теоремы,
    из утверждения первой части теоремы~\ref{Th2} немедленно получаем, что для всех ${f\in\mathcal{X}}$ при всех ${t > s}$
    $$
    \|(P_{t,s} - P)f\|^2_{\mathcal{H}}
    \leq 4A\|f\|^2_{\mathcal{X}}(t-s)^{-2};
    $$
    поэтому
    $$
    \|P_{t,s} - P\|^2_{\mathcal{X} \to \mathcal{H}}=
    \sup\limits_{f \in \mathcal{X} : f \neq 0} \frac{\|(P_{t,s} - P)f\|^2_\mathcal{H}}{\|f\|^2_{\mathcal{X}}} \leq 4A (t-s)^{-2}.
    $$
    В условиях второй части теоремы, для всех ${f\in\mathcal{X}}$
    при всех ${t>s}$
    $$
    \frac{\|(P_{t,s} - P)f\|^2_\mathcal{H}}{\|f\|^2_{\mathcal{X}}} \leq B (t-s)^{-2},
    $$
    и из утверждения второй части теоремы~\ref{Th2} немедленно следует, что
    $$
    \int_{\mathbb{R}}x^{-2}\,d\sigma_{f-f^*}(x)
    \leq 8B\|f\|^2_{\mathcal{X}}.
    $$
\end{proof}
\begin{theorem}\label{Th2++} Пусть $\mathcal{H}$ --- гильбертово пространство,
    $\mathcal{X} \subseteq \mathcal{H}$ --- его векторное подпространство
    со своей нормой  $\|\cdot\|_{\mathcal{X}},$ которое непрерывно в
    него вложено, т.е. без ограничения общности считаем $\|\cdot\|_\mathcal{H} \leq
    \|\cdot\|_\mathcal{X}$.

    Если для некоторого $r>0$ существует положительная константа $A,$ такая, что
    для всех $ f \in \mathcal{X}$
    $$
    \int_{(-r,r]}x^{-2}\,d\sigma_{f-f^*}(x)
    \leq A\|f\|^2_{\mathcal{X}},
    $$
    то имеет место степенная с показателем 2 равномерная сходимость
    на пространстве $\mathcal{X}$ в теореме фон Неймана: при всех $t>s$
    $$
    \|P_{t,s} - P\|^2_{\mathcal{X} \to \mathcal{H}} \leq B(t-s)^{-2},
    $$
    где можно положить ${B=4\left(A+\frac{1}{r^2}\right)}.$
    \end{theorem}
\begin{proof}[Доказательство теоремы~\ref{Th2++}]
По представлению (1) при всех ${t>s}$

\begin{align*}
\|P_{t,s}f-f^*\|_\mathcal{H}^2&=\int\limits_{\mathbb{R}}
\left(\frac{\sin\frac{(t-s)x}{2}}{\frac{(t-s)x}{2}}\right)^2
d\sigma_{f-f^*}(x)\leq
\frac4{(t-s)^{2}}\int\limits_{\mathbb{R}}x^{-2}\,d\sigma_{f-f^*}(x)=\\
&=\frac4{(t-s)^{2}}\int\limits_{(-r,r]} x^{-2}\,d\sigma_{f-f^*}(x)
+\frac4{(t-s)^{2}}\int\limits_{(-\infty,-r]\cup(r,\infty)}
x^{-2}\,d\sigma_{f-f^*}(x)\leq\\
&\leq\frac{4}{(t-s)^{2}}\left(A\|f\|^2_\mathcal{X}+\frac{\|f\|^2_\mathcal{H}}{r^2}\right)
\leq
\frac{4\|f\|^2_\mathcal{X}}{(t-s)^{2}}\left(A+\frac{1}{r^2}\right),
\end{align*}
что и требовалось.
\end{proof}

{\bf\thesection.5.} Сделаем несколько заключительных замечаний.
\begin{remark}\label{rm5}
Следуя доказательствам пунктов (1) и (2) теоремы~\ref{ThMeasures},
можно слегка улучшить константу, возникающую в
теореме~\ref{ThMorisse+BenArt} при ${q\in[0,2]}$.
\end{remark}
А именно, в условиях теоремы~\ref{ThMorisse+BenArt} при всех
${q\in[0,2]}$ для любой ${f\in\mathcal{X}}$ будет
$$
\|P_{t,-t}f-Pf\|^2_{\mathcal{H}}=
\int_{\mathbb{R}\setminus\{0\}}\frac{\sin^2(tx)}{(tx)^2}d\sigma_f(x)\leq
\frac{t^{-q}}{\rho(q)}\int_{\mathbb{R}\setminus\{0\}}|x|^{-q}\,d\sigma_f(x)\leq
$$
$$
\leq\frac{t^{-q}}{\rho(q)}
\left(
\|f\|^2_\mathcal{X}\int_{[-r,r]\setminus\{0\}}|x|^{-q}\psi(x)\,dx+r^{-q}\sigma_f(\mathbb{R})
\right)=
\frac{t^{-q}}{\rho(q)}\left(\|f\|^2_\mathcal{X}\Psi_q(r)+r^{-q}\|f\|^2_\mathcal{H}\right),
$$
т.е. при всех ${t>0}$
$$
\|P_{t,-t}-P\|_{\mathcal{X} \to \mathcal{H}}\leq
\sqrt{\frac{\Psi_q(r)+r^{-q}}{\rho(q)}}{t^{-q/2}}.
$$
Теорема~\ref{Th2++} при $q=2$ дает точно такую же оценку.
\begin{remark}\label{rm4} Рассуждения, проведенные при доказательстве теоремы~\ref{Th2} выше,
    позволяют ввести на пространстве $\mathcal{Y}$ из формулировки
    теоремы~\ref{ThCharacterizationOfSpaces} норму
    ${\||\cdot\||_{\mathcal{Y}}}$, эквивалентную норме
    ${\|\cdot\|_{\mathcal{Y}}}$, основанную на особенности спектральной
    меры в нуле, a именно:
    $${\||f\||^2_{\mathcal{Y}}=\int_\mathbb{R}x^{-2}\,d\sigma_f(x)}.$$
\end{remark}

Важным инструментом исследования конечных борелевских мер~$\mu$ на
прямой является преобразование
Фурье~${\hat{\mu}(t)=\int_\mathbb{R}e^{itx}\,d\mu(x)},
{t\in\mathbb{R}}.$ По его асимптотике на бесконечности можно судить
о принадлежности меры введенным выше классам. Приведем примеры таких
утверждений.

\begin{remark}\label{rm6}  Каждая конечная борелевская мера~$\mu,$
для которой преобразование Фурье  при ${t\to+\infty}$ имеет
степенную скорость сходимости порядка $O(t^{-q})$ для ${q\in[0,1)}$,
будет принадлежать классу~${\mathcal{K}^q_3}.$ Для спектральных мер
полупотоков этот результат был получен в~\cite[теорема~2]{JK};
доказательство для произвольных мер проводится аналогично.

Если же ${\frac{1}{T}\int_0^T|\hat{\mu}(t)|^2\,dt}$ имеет степенную
асимптотику ${O(T^{-q})}$ при ${T\to\infty}$ для ${q\in[0,1]},$ то мера
$\mu$ будет равномерно $q/2$-гельдеровской (т.е. найдется константа
$C>0$ такая, что для любого интервала $I$ с мерой Лебега $|I|<1$
будет ${\mu(I)\leq C|I|^{q/2}}$)~\cite[теорема~3.1]{Last96}. В
частности, такая мера лежит в классе~${\mathcal{K}^{q/2}_4}.$
\end{remark}

Работа выполнена в рамках государственного задания ИМ СО РАН (проект
№ FWNF-2022-0004).

\newpage


\newpage

\begin{center}
АННОТАЦИЯ
\end{center}

\

Качуровский А.Г., Подвигин И.В., Тодиков В.Э. {\sc Равномерная сходимость на подпространствах в
эргодической теореме фон Неймана с непрерывным временем.}

\

Рассматривается степенная равномерная (в операторной норме) сходимость на векторных подпространствах со своими нормами в эргодической теореме фон Неймана с непрерывным временем. Найдены все возможные показатели степени рассматриваемой степенной сходимости; для каждого из этих показателей даны спектральные критерии такой сходимости и получено полное описание всех таких подпространств. Равномерная сходимость на всем пространстве имеет место
лишь в тривиальных случаях, что объясняет интерес к равномерной сходимости
именно на подпространствах.

Кроме того, попутно обобщены и уточнены старые оценки скоростей сходимости
в эргодической теореме фон Неймана для (полу)потоков.

\

Ключевые слова: эргодическая теорема фон Неймана; скорости сходимости в эргодических теоремах; степенная равномерная сходимость.

\

УДК 517.987+519.214


\newpage

\begin{center}
    ANNOTATION
\end{center}

\

Kachurovskii A.G., Podvigin I.V., Todikov V.E. {\sc Uniform convergence \\ on subspaces in von Neumann's ergodic theorem with continuous time.}

\

Power uniform (in the operator norm) convergence on vector subspaces
with their own norms in von Neumann's ergodic theorem with
continuous time is considered. All possible exponents of the
considered power convergence are found; for each of these exponents,
spectral criteria for such convergence are given and a complete
description of all such subspaces is obtained. Uniform convergence
over the entire space takes place only in trivial cases, which
explains the interest in the uniform convergence just on subspaces.

In addition, along the way, the old convergence rate estimates in the von Neumann ergodic theorem for (semi)flows are generalized and refined.

\

Key words: von Neumann's ergodic theorem; rates of convergence in ergodic theorems; power uniform convergence.

\

MSC2020: Primary 37A30; Secondary 37A10, 47A35, 60G10.


\end{document}